\newtheorem{theorem}{Theorem}[section]
\newtheorem{lemma}[theorem]{Lemma}
\newtheorem{proposition}[theorem]{Proposition}
\newtheorem{conjecture}[theorem]{Conjecture}
\newtheorem{definition}[theorem]{Definition}
\theoremstyle{definition}
\newtheorem{remark}[theorem]{Remark}
\newcommand{\C}{\mathbb{C}}
\newcommand{\R}{\mathbb{R}}
\newcommand{\Z}{\mathbb{Z}}
\newcommand{\x}{\displaystyle}
\newcommand{\wt}{\widetilde}
\newcommand{\rx}{\mathcal{R}_X}
\renewcommand{\ul}{\underline{\lambda}}
\newcommand{\M}{\mathcal{M}} 
\newcommand{\CC}{\mathbb{C}}
\newcommand{\RR}{\mathbb{R}}
\newcommand{\NN}{\mathbb{N}}
\newcommand{\ZZ}{\mathbb{Z}}
\renewcommand{\ul}{\lambda}
\newcommand{\beee}{\begin{equation*}}
\newcommand{\bee}{\begin{equation}}
\newcommand{\ee}{\end{equation}}
\newcommand{\eee}{\end{equation*}}
\newcommand{\tq}{\: | \:}
\begin{document}

\title[Geometric realizations and duality for DM and DPV modules]{Geometric realizations and duality \\for Dahmen-Micchelli modules \\and De Concini-Procesi-Vergne modules}

\author[F. Cavazzani]{Francesco Cavazzani$^\ast$}
\thanks{$^\ast$Harvard University.}
\address{Department of Mathematics, FAS, Harvard University, 1 Oxford Street, Cambridge MA, 02138 USA}
\email{franc@math.harvard.edu}

\author[L.~Moci]{Luca Moci$^\dag$}
\thanks{$^{\dag}$IMJ, Universit\'{e} de Paris 7. Supported by a Marie Curie Fellowship of Istituto Nazionale di Alta Matematica.}
\address{IMJ, Universit\'{e} de Paris 7, 5 rue Thomas Mann, 75205 Paris Cedex 13, France}
\email{lucamoci@hotmail.com}
\maketitle
\thispagestyle{empty}
\begin{abstract}
We give an algebraic description of several modules and algebras
related to the vector partition function, and we prove that they can be realized as the
equivariant K-theory of some manifolds that have a nice
combinatorial description.
We also propose a more natural and general notion of duality between
these modules, which corresponds to a Poincaré duality-type
correspondence for equivariant K-theory.
\end{abstract}

\section{Introduction}
In recent years, the \emph{multivariate spline} and the \emph{vector partition function} have been studied by several authors. While the former is an essential tool in Approximation Theory, the latter has been studied in Combinatorics at least since Euler. Although they may seem quite different in nature, they can be viewed as the volume, and the number of integer points respectively, of a variable polytope; thus the partition function is the \emph{discretization} of the spline. In their book \cite{li}, De Concini and Procesi brought these functions to the attention of geometers, showing their relation with \emph{hyperplane arrangements} and \emph{toric arrangements}.

These functions are \emph{piecewise} polynomial/quasi-polynomial respectively, meaning that their support can be divided in regions called \emph{big cells}, such that on every big cell $\Omega$, the spline agrees with a polynomial $p_\Omega$, and the partition function agrees with a quasi-polynomial $q_\Omega$.

The polynomials $p_\Omega$, together with their derivatives, form a vector space $D(X)$. In a more algebraic language, $D(X)$ is generated by the elements $p_\Omega$ as module over the ring of polynomials, acting as derivations. On the other hand, the elements $q_\Omega$ generate $DM(X)$, as a module over the Laurent polynomials, acting as difference operators. The \emph{Dahmen-Micchelli modules} (\emph{DM modules} for short) $D(X)$ and $DM(X)$ are defined by a system of differential/difference equations respectively, having a simple combinatorial description in terms of the \emph{cocircuits} of the associated matroid (\cite{DM2, DMi, DH}). 

For every big cell $\Omega$, it is also natural to consider $D_\Omega(X)$, the cyclic submodule of $D(X)$ generated by the single polynomial $p_\Omega$, and $DM_\Omega(X)$, the cyclic submodule of $DM(X)$ generated by the quasi-polynomial $q_\Omega$. We call them the \emph{local DM modules}.

Furthermore, studying the partition function and the spline led to the definition of two flags of modules $\tilde{\mathcal F}_i(X)$ and $\tilde{\mathcal G}_i(X)$, of which $DM(X)$ and $D(X)$ are the smallest elements (\cite{DPV1, DPV2}). We call $\tilde{\mathcal F}_i(X)$ and $\tilde{\mathcal G}_i(X)$ the \emph{De Concini-Procesi-Vergne modules} (or \emph{DPV modules} for short).

The DM modules and the local DM modules naturally come together with their \emph{dual modules} $D^*(X)$, $D_\Omega^*(X)$, $DM^*(X)$,   $DM_\Omega^*(X)$, all described as quotients of the ring of polynomials by suitable ideals, and hence endowed by a structure of \emph{algebras}. As we show in Theorem \ref{resume}, this duality can be viewed in a more natural way via an $Ext$ functor. 
This choice has several advantages: it gives rise to a genuine duality in the category of finitely generated  $S[\frak g^*]$-modules (or $R(G)$-modules respectively), which corresponds to the Verdier duality in the derived categories. Furthermore, this algebraic duality leads to a statement of Poincar\'e duality in equivariant K-theory that we will prove in Theorem \ref{teor-F^*}. This approach
also allows us to define the \emph{dual DPV modules} $\mathcal F_i^*(X)$, $\mathcal G_i^*(X)$ (see Definitions \ref{def-Fi*}, \ref{def-Gi*} and Proposition \ref{DPV-filtr}).

Surprisingly, the modules and algebras above appear as invariants of geometric objects. In particular, the module $D(X)$ can be ``geometrically realized'' as the \emph{equivariant cohomology} of a differentiable manifold, while its discrete counterpart $DM(X)$ can be ``geometrically realized'' as the \emph{equivariant K-theory} of the same manifold.

The construction goes as follows. Let $X$ be the list of vectors in $\Z^d$ that defines the spline and the partition function. Each element of $X$ defines a 1-dimensional representation of the torus $G=(\mathbb S^1)^d$, hence there is a representation $M_X$ which is the direct sum of all them. Let $M_X^{fin}$ be the open subset of $M_X$ of points with finite stabilizer; this is the complement of a linear subspace arrangement. We have:

\begin{itemize}
\item[a)] $H^*_{c,G}(M_X^{fin})\cong D(X);$
\item[b)] $H^*_{G}(M_X^{fin})\cong D^*(X).$
\end{itemize}

Here the duality between $D(X)$ and $D^*(X)$ is realized by the Poincaré duality between the ordinary cohomology $H$ and compact support cohomology $H_{c}$ of the \emph{toric orbifold} $M_X^{fin}/G_\C$ (see Section \ref{toric}).

An analogue of this result can be provided for discrete DM modules; this is done by considering {equivariant K-theory}. However, this is intrinsically a compact support cohomology theory, thus lacking of a natural non-compact-support counterpart. Instead of looking for a different definition of equivariant K-theory non involving compact support, our approach will be based on compactifying the manifold $M_X^{fin}$. In fact we intersect it with the equivariant unit sphere and then remove open tubular neighborhoods of every resulting hypersurface, thus obtaining a compact \emph{manifold with corners} $S_X^{fin}$. Then we have:
\begin{itemize}
\item[c)] $ K_G^*(M_X^{fin})\cong DM(X);$
\item[d)] $K_G^*(S_X^{fin})\cong DM^*(X).$
\end{itemize}

Facts a), b) and c) are consequences of statements proved by De Concini, Procesi and Vergne for DPV modules, that we will recall in Theorems \ref{teor-F}, \ref{teor-G}, \ref{teor-G^*}. In particular the proof of a), given in \cite{DPV2}, is based on \emph{index theory of transversally elliptic operators}, and answers to a question that Atiyah raised about four decades ago (\cite{ell}). The proof of c), given in \cite{DPV3}, required the development of a further tool, the \emph{infinitesimal index}. 

In this paper we prove Theorem \ref{teor-F^*} for dual DPV modules, that implies fact d).

We leave open the problem of finding geometric realizations for the local modules $D_\Omega(X)$ and $D_\Omega(X)$ (see Conjectures \ref{cong-local} and \ref{cong-local2}).

We think that also other objects, such as some modules arising from the \emph{power ideals} studied by Ardila and Postnikov \cite{AP}, the \emph{semi-internal zonotopal} modules introduced by Holtz, Ron and Xu in \cite{HRX}, and their generalizations proposed by Lenz \cite{Le}, may admit a similar geometric realization, that we hope to study in future papers.

\bigskip

\textbf{Acknowledgements. }
We are very grateful to Corrado De Concini and Michèle Vegne for many inspiring suggestions and conversations.
We also want to thank Dave Anderson, Omar Antol\'in Camarena, Alessandro D'Andrea, Alex Fink, Gijs Heuts, Mike Hopkins, Matthias Lenz, Ivan Martino and Angelo Vistoli for helpful remarks and suggestions. 

\tableofcontents

\section{Recalls on Representation Theory and Combinatorics}

\subsection{First notations}\label{notations}

Let $G$ be an abelian compact Lie group, and let $\Gamma\doteq Hom(G,\mathbb S^1)$ be its character group. 
We will assume for simplicity that $G$ is connected, i.e., it is a torus.
Let $\mathfrak{g}$ be the Lie algebra of $G$: this can be identified to the tangent space at $1$ of $G$, so that the differential $d\lambda$ of every character $\lambda$ is an element of the dual space $\mathfrak{g}^*$; by a slight abuse of notation we will sometimes identify $d\lambda$ with $\lambda$.
We denote by $S[\mathfrak g^*]$ the symmetric algebra of polynomial functions on $\mathfrak g$, and by $R(G)$ the character ring of $G$, that is, the group algebra of $\Gamma$.

For the sake of concreteness, let us say that $\mathfrak{g}^*$ (as well as $\mathfrak{g}$) is a real vector space $V$, and if we denote its dimension by $d$, $G$ is isomorphic to $(\mathbb S^1)^d$, while $\Gamma$ is isomorphic to $\Z^d$. $S[\mathfrak g^*]$ is isomorphic to the ring of polynomials $\R[x_1, \dots, x_d]$ while $R(G)$ is isomorphic to the ring of Laurent of polynomials $\Z[x_1^{\pm 1}, \dots, x_d^{\pm 1}]$. Then $\Gamma$ embeds in $S[\mathfrak g^*]$ and in $R(G)$ as
$$(m_1, \dots, m_d)\mapsto m_1 x_1+ \dots +m_d x_d$$
and
$$(m_1, \dots, m_d)\mapsto x_1^{m_1} \dots x_d^{m_d}$$
respectively.

All the results in this paper may be extended to the case of $G$ not being connected.
In this case $G$ is isomorphic to the product of a compact torus $(\mathbb S^1)^d$ and a finite group $G_f$, and $\Gamma$ is isomorphic to $\Z^d\times G_f$. Then we have a projection $\Gamma\to V$ which forgets the torsion part of $\Gamma$.

Let $\mathcal{C}[\Gamma]$ be the space of $\Z$-valued functions on $\Gamma$. On this space every element $a\in\Gamma$ acts as the translation $\tau_{a}$; this extends to an action of $R(G)$. We define the difference operator $\nabla_a=1-\tau_a$, i.e.:
$$\nabla_a f(x)\doteq f(x)-f(x-a).$$

Let $X=[a_1, \dots, a_n]$ be a finite list of elements of $\Gamma$. We will always assume that $rk (X)=rk(\Gamma)$, and that none of the elements of $X$ is zero (otherwise, it is simple to reduce to this case).

Following \cite{DPV1}, we say that a linear subspace $\underline{s}\subseteq V$ is \textit{rational} if it is spanned by elements of $X$;
we denote by $\rx$ the set of all rational subspaces.
With a little abuse of notation, we will indicate by $\underline{s}$ both a linear subspace of $V$ and the list of elements of $X$ that belong to such subspace.

\subsection{DM modules}

We recall that $A\subset X$ is a \emph{cocircuit} if $A=X\setminus \underline s$ for some rational hyperplane $\underline s$ (i.e. for some $\underline s \in \rx$ such that $codim(s)=1$).
Let us define the set
$$\mathcal L(X)\doteq \{A\subseteq X \tq C\subseteq A \text{ for some cocircuit } C\}.$$
For every $A\subseteq X$, we consider the difference operator $\nabla_A=\prod_{a\in A}\nabla_a$ acting on $\mathcal C[\Gamma]$.
The \emph{discrete DM module} is defined as
$$DM(X)=\{f\in\mathcal{C}[\Gamma]\tq \nabla_{A}f=0 \text{ for every } A\in\mathcal L(X) \}.$$

This can be seen as the ``discretization'' of the \emph{differentiable DM module}
$$D(X)=\{f: V\rightarrow \RR \tq \partial_{A}f=0 \text{ for every } A\in \mathcal L(X)\}$$
where the differential operator
$\partial_A=\prod_{a\in A}\partial_a$
is just the product of directional derivatives.
Of course, as in many of the definitions that will follow, it is enough to check the equations above for the minimal elements of $\mathcal L (X)$, that is, the cocircuits.

The space $D(X)$ is naturally a module over $S[\frak g^*]\simeq \R[x_1, \dots, x_d]$, the action being given by derivation:
$x_i\cdot f\doteq\partial_{x_i}p$.
On the other hand, $DM(X)$ is naturally a module over $R(G)\simeq \Z[x_1^{\pm 1}, \dots, x_d^{\pm 1}]$, the action being given by difference operators:
$x_i\cdot f\doteq\nabla_{x_i}f$.

The  vector space underlying to $D(X)$ is the space $\mathcal D(X)$ studied in \cite{HR}.

We say that the list $X$ is \emph{unimodular} (or \emph{totally unimodular}) if every $k\times k$ minor of $X$ is equal to $1$, $-1$ or $0$, for every $1\leq k\leq d$; in other words, if every basis taken out from $X$ generates the whole lattice $\Gamma$. In this case it is easy to see that $D(X)=DM(X)\otimes_\Z \R$.

\begin{remark}
One can see from the definition that $D(X)$ essentially depends on the linear algebra of the vectors in $X$, while $DM(X)$ also depends on the arithmetic of the vectors. The need to encode this arithmetic information in a combinatorial object led to the introduction of \emph{arithmetic matroids} (\cite{DM, BM}) and \emph{matroids over $\Z$} (\cite{FM}).
\end{remark}

As proved in \cite{DM1} and \cite{DM2} respectively, the dimension of $D(X)$ (as a real vector space) is equal to the number of bases which can be extracted from $X$, while the rank of $DM(X)$
 (as a free $\Z$-module) is equal to the volume of the \emph{zonotope}

$$\mathcal Z(X)=\left\{\sum_{i=1}^d t_i a_i, \qquad 0\leq t_i\leq 1\right\}.$$

These spaces were introduced in order to study two important functions, that we are going to describe in the next subsection.

\subsection{Vector partion function and multivariate spline}

For every $\lambda\in\Gamma$, we define $\mathcal{P}_X(\lambda)$ as the number of ways we can write

$$
\lambda=\sum_{i=1}^n x_i a_i\qquad x_i\in\NN.
$$
Since we want this number to be finite, we assume that all the elements $a_i$ of the list $X$ lie on the same side of a hyperplane in $V$. We can always do that, eventually replacing some vectors by their opposites.
We call $\mathcal{P}_X(\lambda)$ the \emph{vector partition function}, or simply the \emph{partition function}.

The equation above is indeed a system of diophantine equations, one for every coordinate of $\lambda$. We can rewrite this system as $\overline X x =\lambda$, where $\overline X$ is the matrix whose columns are the vectors $a_i\in X$, and $x$ is the vector whose entries are the variables $x_i$.

This defines a subspace of $\RR^n$. The intersection of this subspace with the positive orthant is a \emph{variable polytope}

$$
P_X(\lambda)=\left\{x\in\left(\RR_{\geq 0}\right)^n\mid \overline X x=\lambda\right\}$$
and the partition function is the number of its integer points:
$$
\mathcal{P}_X(\lambda)=\left\vert P_X(\lambda)\cap\ZZ^n\right\vert.
$$

Then $\mathcal{P}_X(\underline{\lambda})$ is related with another function:
$\M_X(\ul)\doteq vol\left(P_X(\ul)\right)$.
Indeed the number of integer points of a polytope is the ``discrete analogue'' of its volume.
The function $\M_X(\ul)$, which is well defined for every $\ul\in \RR^n$, is known as the \emph{multivariate spline} (or simply the \emph{spline}).
Splines are used in Numerical Analysis to approximate functions.
The word ``spline'' means that $\M_X$ is piecewise polynomial and ``as smooth as possible''; we will now make more precise this statement.

First of all, notice that both the functions $\M_X, \mathcal P_X$ are supported on the cone

$$C(X)=\left\{\sum_{i=1}^n t_i a_i, \qquad t_i\geq 0\right\}$$

For every cocircuit $A$, we consider the cone $C(X\setminus A)$ spanned by $X\setminus A$. We define a \emph{big cell} as a connected component of
$$C(X)\setminus \bigcup_{A\in \mathcal L(X)}C(X\setminus A).$$

Then we have:
 \begin{theorem}[de Boor-Hollig]
For every big cell $\Omega$, there is a polynomial $p_\Omega\in D(X)$ such that $\M_X$ and  $p_\Omega$ coincide on $\Omega$. Moreover, all the polynomials $p_\Omega$ have degree $n-d$ and $\M_X$ is continuous of class $C^{n-d-1}(V)$.

\end{theorem}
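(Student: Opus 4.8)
The plan is to build the polynomial $p_\Omega$ directly, by exhibiting an explicit piecewise-polynomial formula for $\M_X$ and then checking, on each big cell $\Omega$, that the resulting polynomial lies in $D(X)$ and has the claimed degree and regularity. I would proceed by induction on $n = |X|$, the number of vectors. The base case is $n = d$: here $X$ is a basis (recall $\mathrm{rk}(X) = d$), the cone $C(X)$ is simplicial, and $\M_X$ is the constant function $1/|\det \overline X|$ on the interior of $C(X)$ (and $0$ outside), which is trivially a degree-$0$ polynomial on the unique big cell. For the inductive step, write $X = [Y, a_n]$ with $Y$ a list of $n-1$ vectors still of rank $d$. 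The key identity is the convolution recursion for the multivariate spline: for $\ul$ in the support,
\begin{equation*}
\M_X(\ul) = \int_0^\infty \M_Y(\ul - t\, a_n)\, \ud t,
\end{equation*}
which follows from Fubini applied to the defining integral of $\M_X$ as the volume of $P_X(\ul)$ (slicing the polytope by the value of the last coordinate $x_n$). By the inductive hypothesis $\M_Y$ is piecewise polynomial of degree $n-1-d$ on the big cells of $Y$, so integrating a polynomial of degree $n-1-d$ along the ray in direction $a_n$ produces a polynomial of degree $n-d$ on each region cut out by the hyperplanes spanned by subsets of $Y$ together with the translates of those by $\mathbb{R}_{\geq 0} a_n$ — and these are exactly (refinements of) the big cells of $X$.

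Having produced a polynomial $p_\Omega$ of degree $n-d$ agreeing with $\M_X$ on each big cell $\Omega$, I would next verify $p_\Omega \in D(X)$, i.e. $\partial_A p_\Omega = 0$ for every cocircuit $A$. Fix a cocircuit $A = X \setminus \underline s$ with $\underline s$ a rational hyperplane. The differential operator $\partial_A = \prod_{a \in A} \partial_a$ has order $|A|$; since $A$ is a cocircuit and the complement of $A$ is a hyperplane spanned by vectors of $X$, one has $|A| \geq n - d + 1$ (any spanning subset of the hyperplane $\underline s$ has at least $d-1$ elements, so $A$ misses at most $\ldots$ wait—more precisely $|X \cap \underline s| \geq d-1$, hence $|A| \leq n-(d-1) = n-d+1$; combined with the fact that $A$ is nonempty and must cut $C(X)$, one gets $|A| = n-d+1$ for minimal cocircuits). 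Since $\deg p_\Omega = n-d < |A|$, the operator $\partial_A$ kills $p_\Omega$ for degree reasons alone. (Alternatively, and perhaps cleaner: apply $\partial_A$ to the recursion and use that for $a_n \in A$ the integral telescopes, while for $a_n \notin A$ one reduces to a cocircuit of $Y$; this gives the vanishing without a separate degree count, and simultaneously confirms the degree bound by showing no lower-order relation is forced.)

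Finally I would establish the regularity statement $\M_X \in C^{n-d-1}(V)$ and continuity across big-cell walls. Continuity of $\M_X$ itself (the $C^0$ claim) is immediate from the convolution recursion: integrating a bounded piecewise-polynomial function against $\ud t$ along a fixed direction produces a continuous function, and the base case is handled by noting $\M_X$ vanishes on the boundary of $C(X)$ when $n > d$. For higher smoothness, I would argue inductively: differentiating the recursion in a direction $v$ transverse to a wall $W$ of a big cell, the jump of $\partial_v^k p_\Omega$ across $W$ is controlled by the jump of $\partial_v^{k-1} \M_Y$ across the corresponding wall of $Y$, which vanishes for $k - 1 \leq n-1-d-1$ by induction; this yields $C^{n-d-1}$. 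The main obstacle here is bookkeeping: one must check that the walls of the big cells of $X$ really are exactly the images under the convolution of walls of $Y$ (no new singular hyperplanes are created, and none of the old ones become "smooth"), which requires a careful matroid-theoretic argument relating the cocircuits of $X$ to those of $Y$ and $Y/a_n$ — this is the step I expect to be most delicate. Everything else is a routine, if somewhat lengthy, induction.
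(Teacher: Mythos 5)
The paper itself offers no proof of this statement---it is recalled from the classical spline literature (reference [DH])---so there is no internal argument to compare against; your convolution-recursion strategy is indeed the standard route. However, your main argument that $p_\Omega\in D(X)$ has a genuine gap. You want $\partial_A p_\Omega=0$ for every cocircuit $A$ ``for degree reasons alone,'' which requires $|A|\geq n-d+1$; but, as you half-notice mid-sentence, the inequality you can actually prove goes the other way: since a rational hyperplane $\underline s$ contains at least $d-1$ elements of $X$, one only gets $|A|=|X\setminus\underline s|\leq n-d+1$, and the patch ``one gets $|A|=n-d+1$ for minimal cocircuits'' is false. Concretely, take $X=[e_1,e_2,e_2,e_2]$ in $\Z^2$, so $n=4$, $d=2$, $n-d=2$. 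The hyperplane $\underline s=\langle e_2\rangle$ gives the cocircuit $A=\{e_1\}$ of cardinality $1<n-d+1=3$, while $p_\Omega$ is a multiple of $\lambda_2^2$, of degree $2$; so $\partial_A$ does not kill it for degree reasons (it kills it because $p_\Omega$ happens not to depend on $\lambda_1$, which is exactly what the recursion, and not any degree count, detects). The same example shows that the $C^{n-d-1}$ claim itself holds only under an implicit genericity hypothesis on $X$ (here $\M_X$ is not even continuous across $\{\lambda_1=0\}$), so no argument based purely on $\deg p_\Omega=n-d$ can establish membership in $D(X)$ in general.

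The correct argument is the one you relegate to a parenthesis, and it should be the main line: differentiating the recursion gives $\partial_{a_n}\M_X=\M_Y$ (using that $\M_Y(\lambda-ta_n)\to 0$ as $t\to\infty$, which needs the standing hypothesis that $X$ lies in an open half-space). Then for a cocircuit $A=X\setminus\underline s$ with $a_n\notin\underline s$ one reduces to $\partial_{A\setminus\{a_n\}}\M_Y$ with $A\setminus\{a_n\}=Y\setminus\underline s$ containing a cocircuit of $Y$, while for $a_n\in\underline s$ the set $A=Y\setminus\underline s$ already contains a cocircuit of $Y$ and commutes past the integral; induction on $n$ closes both cases. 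You should also address the choice of $a_n$: to keep $\mathrm{rk}(Y)=d$ in the recursion you must pick $a_n$ that is not a coloop, or handle the coloop case separately via the product decomposition of $\M_X$. With these repairs your sketch becomes the standard de Boor--H\"ollig/Dahmen--Micchelli proof, and the cell-bookkeeping you flag at the end is, as you say, the remaining (routine but lengthy) work.
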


Since $\mathcal P_X$ is the discretization of $\M_X$, it is natural to wish a discrete analogue of the theorem above. Such an analogue exists, but with two important differences.

The first can be understood by looking at the 1-dimensional example given by the list $[2,1]\in \Z$. Here $\mathcal P_X(\ul)=\ul/2 +1$ when $\ul$ is even, and $\mathcal P_X(\ul)=\ul/2 +1/2$ when $\ul$ is odd. Then, in general,  we recall the following definition: a function $q: \Gamma\rightarrow \Z$ is a \emph{quasi-polynomial} if there exist a finite index subgroup of $\Gamma$ such that on every coset, $q$ coincides with a polynomial.
As we will see, the partition function is piecewise quasi-polynomial.

The second issue is what a ``discrete analogue'' of smoothness should be. The natural idea is to require that the regions of quasi-polynomiality overlap a bit, that is, given two neighboring big cells, there is a stripe on which the corresponding quasi-polynomials agree.
More precisely, by \cite{SzV} we have:

\begin{theorem}[Dahmen-Micchelli, Szenes-Vergne]\label{DMTH}
For every big cell, there is a quasi-polynomial $q_\Omega\in DM(X)$ such that $\mathcal{P}_X$ and $q_\Omega$ coincide on $\Omega$. Moreover, they coincide on a larger region, the Minkowski sum of $\Omega$ and $-\mathcal{Z}(X)$.
\end{theorem}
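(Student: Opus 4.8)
The plan is to argue by induction on the corank $n-d$ of $X$, constructing $q_\Omega$ on each big cell as a ``discrete antiderivative'' of the quasi-polynomials attached to $X'=X\setminus\{y\}$ and then controlling the region of agreement by hand. The basic tool is the recursion for partition functions: if $X=X'\sqcup\{y\}$, grouping the representations $\lambda=\sum x_i a_i$ by the multiplicity of $y$ gives $\mathcal{P}_X(\lambda)=\sum_{k\geq 0}\mathcal{P}_{X'}(\lambda-ky)$, a finite sum by our standing assumptions, whence
\[
\nabla_y\mathcal{P}_X=\mathcal{P}_{X'},
\]
an identity valid for every sublist $X'$ (reading $\mathcal{P}_{X'}$ as the counting function supported on $C(X')$ even when $X'$ does not span $V$). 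Iterating, $\nabla_A\mathcal{P}_X=\mathcal{P}_{X\setminus A}$ for every $A\subseteq X$; in particular, if $A$ contains a cocircuit $X\setminus\underline{s}$ then $X\setminus A\subseteq\underline{s}$, so $\nabla_A\mathcal{P}_X$ is supported on the lower-dimensional cone $C(X\setminus A)\subseteq\underline{s}$.

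For the base case $n=d$ the list is a basis of $V$, $C(X)$ is simplicial with one big cell $\Omega$, and for $\lambda\in C(X)$ the coefficients in $\lambda=\sum x_i a_i$ form the unique nonnegative rational solution, so $\mathcal{P}_X(\lambda)=1$ exactly when $\lambda\in\sum_i\mathbb{N}a_i$. A comparison of coordinates shows that on $\Omega+(-\mathcal{Z}(X))$ this condition is equivalent to $\lambda\in\sum_i\mathbb{Z}a_i$; hence there $\mathcal{P}_X$ agrees with the quasi-polynomial $q_\Omega$ equal to $1$ on the finite-index sublattice $\sum_i\mathbb{Z}a_i$ and to $0$ on the other cosets, and since for a basis the cocircuit operators are exactly the $\nabla_{a_i}$, each of which preserves that coset structure, $q_\Omega\in DM(X)$. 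For the inductive step choose $y\in X$ with $X'=X\setminus\{y\}$ still of rank $d$ (possible whenever $n>d$); by induction $\mathcal{P}_{X'}$ agrees with a quasi-polynomial $q'_{\Omega'}\in DM(X')$ on $\Omega'+(-\mathcal{Z}(X'))$ for each big cell $\Omega'$ of $X'$. Substituting this description into $\mathcal{P}_X(\lambda)=\sum_{k\geq 0}\mathcal{P}_{X'}(\lambda-ky)$ and applying the discrete summation formula (which sends quasi-polynomials to quasi-polynomials), with the ``constant of integration'' --- a quasi-polynomial invariant under $\nabla_y$ --- fixed by matching the asymptotics of $\mathcal{P}_X$ deep inside $\Omega$, produces on each big cell $\Omega$ of $X$ a quasi-polynomial $q_\Omega$ that represents $\mathcal{P}_X$ at least far inside $\Omega$.

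It remains to upgrade this to the two assertions of the theorem: that $q_\Omega\in DM(X)$, and that $\mathcal{P}_X=q_\Omega$ on \emph{all} of $\Omega+(-\mathcal{Z}(X))$. The latter is the technical heart. Here I would carry the regions through the induction: using $\mathcal{Z}(X)=\mathcal{Z}(X')+[0,1]y$, one checks that the discrete antiderivative along $y$ loses only a strip in the $-y$ direction, so that the margin $-\mathcal{Z}(X')$ furnished by the inductive hypothesis, together with the $[0,1]y$ gained by antidifferentiating, exactly absorbs $-\mathcal{Z}(X)$, after decomposing $\Omega+(-\mathcal{Z}(X))$ according to which cell $\Omega'$ of $X'$ a point meets when projected along $y$. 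Alternatively one can sidestep the bookkeeping by decomposing the generating series $\sum_\lambda\mathcal{P}_X(\lambda)\,x^\lambda=\prod_{a\in X}(1-x^a)^{-1}$ into cell-indexed partial fractions (a Jeffrey--Kirwan / iterated-residue decomposition) whose error terms are supported in explicit shifted cones disjoint from $\Omega+(-\mathcal{Z}(X))$. Granting the region statement, $q_\Omega\in DM(X)$ follows at once: for a cocircuit $A$ of $X$, the quasi-polynomial $\nabla_A q_\Omega$ coincides on $\Omega$ with $\nabla_A\mathcal{P}_X=\mathcal{P}_{X\setminus A}$ --- the shifts occurring in $\nabla_A$ keep the argument within $\Omega+(-\mathcal{Z}(X))$, since the partial sums $\sum_{a\in B}a$ with $B\subseteq A$ lie in $\mathcal{Z}(X)$ --- while $\mathcal{P}_{X\setminus A}$ vanishes off the proper subspace $\underline{s}$; a quasi-polynomial vanishing on a full-dimensional cone is identically zero, so $\nabla_A q_\Omega=0$.

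The main obstacle is precisely this region statement. Once the recursion $\nabla_A\mathcal{P}_X=\mathcal{P}_{X\setminus A}$ is available, the softer parts --- quasi-polynomiality on each cell and membership in $DM(X)$ given a full-dimensional overlap --- are routine; but pinning the overlap down to exactly $\Omega+(-\mathcal{Z}(X))$, rather than to some unspecified neighbourhood of $\overline{\Omega}$, forces one either to control carefully the strip lost at each inductive step or to carry out the explicit generating-function decomposition, and that is where the real content of the Dahmen--Micchelli theorem resides.
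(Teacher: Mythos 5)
The paper does not prove this statement: Theorem \ref{DMTH} is recalled as a classical result of Dahmen--Micchelli, with the proof delegated to the cited literature (\cite{DM2}, \cite{DMi}, and the book \cite{li}). So there is no internal proof to compare against, and your proposal has to stand on its own.

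Your outline follows the standard deletion route, and the parts you actually carry out are correct. The recursion $\nabla_y\mathcal{P}_X=\mathcal{P}_{X\setminus\{y\}}$ is right; the base case $n=d$ is complete (the computation showing that on $\Omega+(-\mathcal{Z}(X))$ a lattice point $\sum u_ia_i$ with all $u_i>-1$ and $u_i\in\Z$ forces $u_i\geq 0$ is exactly the point); and the deduction of $q_\Omega\in DM(X)$ from the overlap statement is clean --- the shifts in $\nabla_A$ stay inside $\Omega+(-\mathcal{Z}(X))$, $\nabla_Aq_\Omega$ agrees on $\Omega$ with $\mathcal{P}_{X\setminus A}$, which vanishes off a hyperplane, and a quasi-polynomial vanishing on the lattice points of a full-dimensional open cone is zero. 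That last reduction is a genuinely nice way to get membership in $DM(X)$ for free once the region of agreement is known.

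However, the inductive step is not a proof but a plan, and you say so yourself. Two things are asserted without argument. First, that the discrete antiderivative $\sum_{k\geq 0}q'_{\Omega'}(\lambda-ky)$ of a quasi-polynomial is again a quasi-polynomial up to a $\nabla_y$-invariant correction, and that the ``constant of integration'' can be pinned down by asymptotics deep inside $\Omega$: this requires handling the fact that as $\lambda$ ranges over $\Omega$, the points $\lambda-ky$ traverse \emph{several} big cells of $X'$ before leaving $C(X')$, so the sum is not a single telescoping of one quasi-polynomial but a sum of contributions from different cells of $X'$ plus boundary terms; showing these assemble into one quasi-polynomial on $\Omega$ is where the actual work sits. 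Second, the overlap statement --- agreement on all of $\Omega+(-\mathcal{Z}(X))$, not just deep inside $\Omega$ --- is reduced to the slogan $\mathcal{Z}(X)=\mathcal{Z}(X')+[0,1]y$ plus ``one checks,'' or alternatively to an unexecuted Jeffrey--Kirwan decomposition. Since you correctly identify this as the real content of the theorem, the proposal as written establishes the soft half of the statement conditional on the hard half. To close it you would need either the careful cell-by-cell bookkeeping of the antiderivative (as in Chapter 13 of the De Concini--Procesi book, where the correct inductive statement is phrased for the pair $(\Omega,\mathcal{Z}(X))$ simultaneously) or the residue/partial-fraction decomposition with explicit control of the supports of the error terms.
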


The two theorems above motivate the interest for the modules $D(X)$, $DM(X)$. In fact these modules contain the ``local pieces'' $p_\Omega$, $q_\Omega$ respectively; more precisely, $D(X)$ is the $S[\frak g^*]-$module generated by the polynomials $p_\Omega$,  and $DM(X)$ is the $R(G)-$module generated by the quasi-polynomials $q_\Omega$, where $\Omega$ ranges over all the big cells.

\subsection{Local DM modules and their generalizations}\label{loc-gen}

For every big cell $\Omega$, it is also natural to consider $D_\Omega(X)$, the cyclic submodule of $D(X)$ generated by the polynomial $p_\Omega$, and $DM_\Omega(X)$, the cyclic submodule of $DM(X)$ generated by the quasi-polynomial $q_\Omega$. 
These modules admit a simple combinatorial description. Let us define

$$\mathcal L_\Omega(X) \doteq\{A\subseteq X \tq C(X\setminus A)\nsupseteq \Omega\}.$$

Notice that $\mathcal L_\Omega(X) $ contains all the cocircuits and is closed under taking supsets. We have:

$$D_\Omega(X)=\{f: V\rightarrow \RR \tq \partial_{A}f=0 \:\forall A\in \mathcal L_\Omega(X)  \}$$

$$DM_\Omega(X)=\{f\in\mathcal C[\Gamma]\tq\nabla_{A}f=0 \:\forall A \in \mathcal L_\Omega(X) \}$$

More generally, given any subset $\mathcal{T}\subseteq 2^X$ closed under taking supsets (that is, if $A\in\mathcal{T}$ and $A\subset B\in 2^X$, then $B\in\mathcal{T}$), we can consider
the $S[\mathfrak g^*]$-module
 $$D_{\mathcal T}(X)=\{f: V\rightarrow \RR \tq \partial_A f=0\:\forall A \in \mathcal T\}$$
and the $R(G)$-module
$$DM_{\mathcal T}(X)=\{f\in\mathcal C[\Gamma]\tq \nabla_A f=0\:\forall A \in \mathcal T\}$$

\begin{lemma}\label{fin-dim}
The module  $DM_{\mathcal T}(X)$ has finite rank over $\Z$ if and only if $\mathcal T$ contains all the cocircuits.
The module $D_{\mathcal T}(X)$ has finite dimension over $\RR$ if and only if $\mathcal T$ contains all the cocircuits.

\end{lemma}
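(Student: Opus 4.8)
The plan is to prove the two equivalences simultaneously, treating the discrete module $DM_{\mathcal T}(X)$ in detail and noting that the differentiable module $D_{\mathcal T}(X)$ is handled verbatim with each $\nabla_a$ replaced by $\partial_a$. For the ``if'' direction I would first observe that, since $\mathcal T$ is closed under taking supsets, containing every cocircuit forces $\mathcal L(X)\subseteq\mathcal T$: any $A\in\mathcal L(X)$ contains some cocircuit $C\in\mathcal T$, hence $A\in\mathcal T$. Imposing more vanishing conditions only shrinks the solution space, so $DM_{\mathcal T}(X)\subseteq DM_{\mathcal L(X)}(X)=DM(X)$ and likewise $D_{\mathcal T}(X)\subseteq D(X)$. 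As recalled above, $DM(X)$ is a free $\Z$-module of finite rank and $D(X)$ is a finite dimensional $\R$-vector space; a subgroup of the former has finite rank and a subspace of the latter is finite dimensional, which gives the claim.

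For the ``only if'' direction I would argue by contraposition: assume a cocircuit $C=X\setminus\underline{s}$, with $\underline{s}\in\rx$ a rational hyperplane, does not belong to $\mathcal T$, and produce infinitely many $\Z$-linearly independent elements of $DM_{\mathcal T}(X)$ (and $\R$-linearly independent elements of $D_{\mathcal T}(X)$). Because $\mathcal T$ is closed under supsets and $C\notin\mathcal T$, no subset of $C$ can lie in $\mathcal T$, so every $A\in\mathcal T$ must contain at least one element $a\in X\cap\underline{s}$. Next I would choose a surjective homomorphism $\phi\colon\Gamma\to\Z$ whose kernel is the saturation of $\underline{s}$ in $\Gamma$ — this exists because $\underline{s}$, being spanned by elements of $X$, meets $\Gamma$ in a saturated sublattice of rank $d-1$ — and extend $\phi$ $\R$-linearly to a map $V\to\R$ vanishing on $\underline{s}$. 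The point is then two one-line identities: for $a\in\underline{s}$ one has $\phi(a)=0$, hence $\nabla_a(q\circ\phi)(x)=q(\phi(x))-q(\phi(x)-\phi(a))=0$ for every $q\colon\Z\to\Z$, and $\partial_a(g\circ\phi)=\phi(a)\,(g'\circ\phi)=0$ for every polynomial $g\in\R[t]$. Since every $A\in\mathcal T$ contains such an $a$ and the operators $\nabla_{a'}$ (resp.\ $\partial_{a'}$) commute, $q\circ\phi\in DM_{\mathcal T}(X)$ for every $q$ and $g\circ\phi\in D_{\mathcal T}(X)$ for every polynomial $g$. Finally, as $\phi$ is surjective the maps $q\mapsto q\circ\phi$ and $g\mapsto g\circ\phi$ are injective and preserve linear independence, so taking for $q$ the indicator functions of the points of $\Z$ (resp.\ for $g$ the monomials $t^k$) yields the desired infinite families.

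I expect the only slightly delicate ingredient to be the existence of the functional $\phi$, i.e.\ the elementary fact that a rational hyperplane meets $\Gamma$ in a saturated sublattice of rank $d-1$; after that, the whole ``only if'' direction reduces to pulling functions back along $\phi$ together with the two trivial computations above, while the ``if'' direction is immediate from the already recalled finiteness of $DM(X)$ and $D(X)$.
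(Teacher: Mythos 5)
Your proposal is correct and follows essentially the same route as the paper: the ``if'' direction via $DM_{\mathcal T}(X)\subseteq DM(X)$, and the ``only if'' direction by exhibiting the infinite family of functions constant on $\underline{s}$ and its translates (which you realize explicitly as pullbacks $q\circ\phi$ along a functional vanishing on $\underline{s}$), using that every element of $\mathcal T$ must meet $X\cap\underline{s}$ because $\mathcal T$ is supset-closed and misses the cocircuit. The only difference is presentational: you make the paper's implicit construction of these functions explicit via the surjection $\phi$.
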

\begin{proof}
If $\mathcal T$ contains all the cocircuits, then $DM_{\mathcal T}(X)\subseteq DM(X)$, because it is defined by the same difference equations, plus further ones. 

Now let us assume that $\mathcal T$ does not contain a cocircuit $A$, and let $\underline s$ be a rational hyperplane such that $A=X\setminus \underline s$. Then we will show that any function that is constant on $\underline{s}$ and on all its translates belongs to $DM_{\mathcal{T}}(X)$. These functions are annihilated by all the operators $\nabla_a$ with $a\in\underline{s}$, and then by all the $\nabla_B$ for all $B$ such that $B\nsubseteq A$; now, notice that all elements of $\mathcal T$ satisfy this condition, so that all these functions belong to $DM_{\mathcal{T}}(X)$. Now, being $\underline{s}$ a proper subspace, its cosets in $\Gamma$ are infinitely many, so such functions are a subset of $DM_{\mathcal{T}}(X)$ of infinite rank over $\Z$ (in fact, a basis of it consists of uncountably many elements).

The same proof holds for $D_{\mathcal T}(X)$.
\end{proof}

\subsection{DPV modules}
Altough $DM(X)$ contains all the local pieces $q_\Omega$ of the partition function $\mathcal P_X$, it does not contain $\mathcal P_X$ itself. In fact all the elements of $DM(X)$ are genuine quasi-polynomials, while the partition function is \emph{piecewise} quasi-polynomial. It is then desirable to have a space that contains both $DM(X)$ and $\mathcal P_X$. This is the $\Z$-module
$${\mathcal{F}}(X)=\{f\in\mathcal{C}[\Gamma]\tq \nabla_{X\backslash\underline{s}}f\ \text{is supported on}\ \underline{s}\ \:\forall\underline{s}\in\rx\}.$$
By ``supported on $\underline s$'' we mean that the support of $f$ (i.e., the subset of the domain on which $f$ takes nonzero values) is contained in $\underline s$.
This space comes with a natural filtration

$$DM(X)={\mathcal{F}}_d(X)\subset{\mathcal{F}}_{d-1}(X)\subset\ldots\subset{\mathcal{F}}_0(X)={\mathcal{F}}(X)$$
where

$${\mathcal{F}}_i(X)=\left\{f\in\mathcal{C}[\Gamma]\tq \begin{array}{l}
\nabla_{X\backslash\underline{s}}f=0 \ \text{if }dim(\underline{s})<i\\
\nabla_{X\backslash\underline{s}}f\ \text{is supported on }\ \underline{s}\ \text{otherwise} \end{array}\ \right\}.$$

Clearly, these $\Z$-modules are \emph{not} invariant for the action by translations of $R(G)$ (because after a translation $\nabla_{X\backslash\underline{s}}f$ is going to be supported on a translate of $\underline{s}$). In fact they generate the \emph{discrete De Concini-Procesi-Vergne modules} (\emph{discrete DPV modules} for short)

$$\tilde{\mathcal{F}}(X)=\{f\in\mathcal{C}[\Gamma]\tq \nabla_{X\backslash\underline{s}}f\ \text{is supported on a finite number of translates of}\ \underline{s}\ \:\forall\underline{s}\in\rx\}$$
$$\tilde{\mathcal{F}}_i(X)=\left\{f\in\mathcal{C}[\Gamma]\tq \begin{array}{l}
\nabla_{X\backslash\underline{s}}f=0 \ \text{for every rational proper subspace }\underline{s}\ \text{of dimension }<i\\
\nabla_{X\backslash\underline{s}}f\ \text{is supported on a finite number of translates of}\ \underline{s}\ \text{otherwise} \end{array}\ \right\}.$$

The space ${\mathcal{F}}(X)$ is a free $\Z$-module of rank equal to the number of integer points of the zonotope $\mathcal Z(X)$. On the other hand, $\tilde{\mathcal{F}}(X)$ and $\tilde{\mathcal{F}}_i(X)$ have clearly infinite rank over $\Z$. All these spaces have been introduced and studied in \cite{DPV1, DPV2}.

In the same way, we can define a real vector space $\mathcal G(X)$ containing both the multivariate spline $\mathcal M_X$ and the space $D(X)$, with a filtration $\mathcal G_i(X)$. The definitions are exactly the same, except for $\nabla$ that is replaced by $\partial$. Again, these vector spaces generate the \emph{differentiable DPV $S[\mathfrak g^*]$-modules} $\tilde{\mathcal{G}}(X)$,  $\tilde{\mathcal{G}}_i(X)$ respectively (see \cite{DPV4}).
These modules have a natural grading, which is given  by the degree of the homogeneous polynomials (or piecewise polynomials).

\begin{remark}
Interestingly, the \emph{external zonotopal space} $\mathcal{D}_+(X)$ studied in \cite{HR} has the same dimension as $\mathcal G(X)$ over $\R$. When the list $X$ is unimodular, this dimension is equal to the number of integer points of the zonotope $\mathcal Z(X)$. However, $\mathcal{D}_+(X)$ and $\mathcal G(X)$ are different spaces: the former contains polynomials, the latter \emph{distributions}. It is not surprising, then, that only the first space is closed under derivations, i.e., is an $S[\mathfrak g^*]$-module. It would be interesting, nevertheless, to establish some canonical correspondence between these two spaces.

The same considerations hold for the \emph{semi-external zonotopal spaces} $\mathcal{D}_+(X, \mathbb I_i)$ studied in \cite{HRX},
when $\mathbb I_i$ is the family of linearly independent sublists of $X$ of rank at least $i$ ($0\leq i\leq d$). In fact, these spaces have the same dimension as the spaces $\mathcal G_i(X)$.

\end{remark}

\subsection{Some representations}\label{RT}

For every rational subspace $\underline{s}\in\rx$, let us define
$$G_{\underline{s}}\doteq \{g\in G \: | \: \chi(g)=1\:\forall \chi\in \Gamma\cap \underline{s}    \}.$$
Notice that this is a torus of dimension $dim(G_{\underline s})= codim (\underline s)$; one can split in a non-canonical way $G$ as $G\simeq G_{\underline s} \times G/G_{\underline s}$.

To every element $a$ of $\Gamma$ correspond a 1-dimensional representation $M_a$ of $G$,
on which every $g\in G$ acts as the multiplication by the scalar $a(g)$.
Then for every list $X$ of elements of $G$ we consider the representation $$\x M_X=\bigoplus_{a\in X}M_a.$$
which has dimension $n$ over $\CC$.

Given a $G$-invariant Hermitian product on $M_X$, let $S_X$ be the unit sphere.
This is a $G$-manifold of dimension $2n-1$ over $\R$.

For every $A\subset X$ we can consider the coordinate subspace $M_A\subset M_X$ given by

$$M_{A}=\{(z_a)_{a\in X}\in M_X\tq z_a=0\text{ for every }a\notin A\}.$$

Notice that the subgroup of elements of $G$ that send the subspace $M_{\underline{s}}\doteq M_{X\cap\underline{s}}$ in itself is precisely $G_s$.

Given any subset $\mathcal{T}\subseteq 2^X$ closed under taking supsets, we can consider

$$M_X^{\mathcal{T}}=M_X\setminus\bigcup_{A\in\mathcal{T}}M_{X\setminus A}.$$

In particular when $\mathcal T=\mathcal L(X)$, we get

$$ M_X^{\mathcal{L}(X)}=M_X\setminus\bigcup_{A\in\mathcal{L}(X)}M_{X\setminus A}=
M_X\setminus\bigcup_{\underline{s}\in\rx}M_{\underline{s}}=
M_X^{fin }.$$
Notice that in the formula above it is sufficient to take the first union over the minimal elements (i.e. cocircuits) and the second union over the maximal elements (i.e. rational hyperplanes).

In the same way, we can stratify $M_X$ by dimension of orbits, and for every $	i\leq d$ call $M^i_X$ the subset of points having $i$-dimensional orbit. Then the subset $M_X^{\geq i}$ of points whose orbit is $i$-dimensional or more is
\begin{equation}\label{tolgochiusi}
M_X^{\geq i}=M_X\setminus\bigcup_{\tiny \begin{array}{c} \underline{s}\in\rx\\ \text{dim}(\underline{s}) < i\end{array}}M_{\underline{s}}.
\end{equation}
Then we have:

$$M_X^{fin}=M_X^{\geq d}\subseteq M_X^{\geq d-1}\subseteq\dots\subseteq M_X^{\geq 0}=M_X$$

Finally, when $\mathcal T=\mathcal L_\Omega(X)$ (as defined in Section \ref{loc-gen}), we get

$$M_X^\Omega=\{(z_a)_{a\in X}\in M_X\tq \exists A\subset X \tq C(A)\supseteq \Omega\text{ and } z_a\neq 0 \:\forall a\in A\}.$$

\subsection{An example}\label{example}

Let us take the list $X=[(1,0), (0,1), (k,k)]$ in $\Gamma=\Z^2$, where $k$ is a positive integer. 
Then we have two big cells; let us call $\Omega$ the one whose extremal rays are spanned by the vectors $(1,0)$ and $(k,k)$ and $\Omega'$ the other one.

The torus $G_\C=(\mathbb S^1)^2$ 
acts on $M_X=\C^3$ by $(t,s).(z_1, z_2, z_3)=(tz_1, sz_2, t^ks^kz_3)$.
Since the cocircuits are the three couples of vectors, we have that
$$M_X^{fin}=\{(z_1, z_2, z_3)\in M_X \tq z_1z_2\neq 0 \text{ or }  z_1z_3\neq 0 \text{ or } z_2z_3\neq 0 \}.$$

If $k=1$, $D(X)$ and $DM(X)$ have a basis given by the three functions $x$, $y$ and $1$, over $\R$ and $\Z$ respectively. A basis of the local modules corresponding to $\Omega$ is given by $y$ and $1$, while a basis of the local modules corresponding to $\Omega'$ is given by $x$ and $1$.

If $k>1$, $D(X)$ is unchanged while $DM(X)$ is the free $\Z-$module of rank $2k+1=\text{vol } \mathcal Z(X)$ that is spanned by $x$, $y$ and by all the functions that are constant in one of the two variables and $k-$periodic in the other. 

Finally, the subset $M_X^\Omega$ is given by the condition $z_1z_2\neq 0 \text{ or }  z_1z_3\neq 0$, that is
$$M_X^\Omega=\{(z_1, z_2, z_3)\in M_X \tq z_1\neq 0 \text{ and }  (z_2, z_3)\neq (0, 0)\}$$

and the module 
$DM_\Omega(X)\simeq K^*_G(M_X^\Omega)$ has rank $k+1$ over $\Z$.

\section{Duality of modules}\label{duality}

In this section we describe some duality relations between DM and DPV modules, that will be reflected in duality statements in equivariant K-theory.

\subsection{Duality for DM modules}\label{duality1}
Let $\mathcal T\subseteq 2^X$ contain all the cocircuits, as in Lemma \ref{fin-dim}.
Let us consider the two embeddings of $\Gamma$ in $S[\mathfrak g^*]$ and in $R(G)$ described in Section \ref{notations}.
Then we can define an ideal of $S[\frak g^*]$ as
$$\mathcal J^\partial_\mathcal T\doteq (d_A)_{A\in \mathcal T}\text{, where }d_A\doteq\prod_{a\in A}a.$$

In the same way we can define an ideal of $R(G)$ as
$$\mathcal J^\nabla_\mathcal T\doteq (\nabla_A)_{A\in\mathcal{T}}\text{, where }\nabla_A\doteq \prod_{a\in A}(1-a).$$

Notice that these ideals are annihilators of $D_\mathcal{T}(X)$ as $S[\frak g^*]$-module and of $DM_\mathcal{T}(X)$ as $R(G)$-module respectively.
Then we define:
$$D_\mathcal T^*(X)\doteq S[\frak g^*] / \mathcal J^\partial_\mathcal T$$
and
$$DM_\mathcal T^*(X)\doteq R(G) / \mathcal J^\nabla_\mathcal T.$$
By definition these are a $S[\frak g^*]$-module and a $R(G)$-module respectively; but, unlike $D_\mathcal T(X)$
 and $DM_\mathcal T(X)$, they also have a multiplicative structure, i.e they are algebras.

In particular when $\mathcal T=\mathcal L(X)$ we get two algebras that we will denote by $D^*(X)$, $DM^*(X)$, and call the \emph{dual DM modules}. While when $\mathcal T= \mathcal L_\Omega(X) $ for a big cell $\Omega$, we will denote the two corresponding algebras by $D_\Omega^*(X)$, $DM_\Omega^*(X)$.

\begin{remark}
The ideal $\mathcal J^\partial_{\mathcal{L}(X)}$ has been studied in \cite{HR}, where is denoted by $\mathcal J(X)$.  The vector space underlying to $D^*(X)$ is the space therein denoted by $\mathcal P(X)$, which is defined using a \emph{power ideal} associated to $X$.
\end{remark}

\begin{lemma}\label{hom-d}
We have the isomorphism of $S[\mathfrak{g}^*]$-modules $$D_\mathcal T(X)\cong\text{Hom}_{\R}(D_\mathcal T^*(X),\R).$$
\end{lemma}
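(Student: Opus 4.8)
The plan is to exhibit an explicit perfect pairing between $D_{\mathcal T}(X)$ and $D_{\mathcal T}^*(X)$, since both sides are finite-dimensional over $\R$ by Lemma \ref{fin-dim} (recall $\mathcal T$ contains all the cocircuits). First I would recall the standard \emph{apolarity} pairing between the polynomial ring $S[\mathfrak g^*]\simeq\R[x_1,\dots,x_d]$ and itself, where a polynomial $g$ acts on a polynomial $f$ by letting $g(\partial)$ be the constant-coefficient differential operator obtained by substituting $x_i\mapsto\partial_{x_i}$, and setting $\langle g,f\rangle\doteq\bigl(g(\partial)f\bigr)(0)$. This is a symmetric $\R$-bilinear form, and with respect to it the perp of an ideal $\mathcal I$ is exactly the space of polynomials annihilated by $\mathcal I(\partial)$ (the ``inverse system'' of Macaulay). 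Applying this with $\mathcal I=\mathcal J^\partial_{\mathcal T}=(d_A)_{A\in\mathcal T}$: the operator $d_A(\partial)=\prod_{a\in A}\partial_a$ is precisely the operator $\partial_A$ defining $D_{\mathcal T}(X)$, so the apolarity-perp of $\mathcal J^\partial_{\mathcal T}$ is, by definition, $D_{\mathcal T}(X)$ (intersected with polynomials — but one checks the annihilator of a zero-dimensional polynomial ideal consists of polynomials, using that $\mathcal T$ contains the cocircuits to force zero-dimensionality).

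Next I would package this into the Hom-statement. The apolarity form descends to a pairing $S[\mathfrak g^*]\times D_{\mathcal T}(X)\to\R$ which kills exactly $\mathcal J^\partial_{\mathcal T}$ on the left (again because $D_{\mathcal T}(X)$ is the inverse system of that ideal), hence induces a pairing $D_{\mathcal T}^*(X)\times D_{\mathcal T}(X)\to\R$. One then checks this induced pairing is nondegenerate: nondegeneracy on the $D_{\mathcal T}^*(X)$ side is immediate since we quotiented by the whole left-kernel; nondegeneracy on the $D_{\mathcal T}(X)$ side is Macaulay's double-perp theorem for Artinian (zero-dimensional) graded ideals — $(\mathcal I^{\perp})^{\perp}=\mathcal I$ — together with finite-dimensionality. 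This gives an $\R$-linear isomorphism $D_{\mathcal T}(X)\xrightarrow{\sim}\mathrm{Hom}_\R(D_{\mathcal T}^*(X),\R)$, $f\mapsto\langle-,f\rangle$. Finally I would verify $S[\mathfrak g^*]$-equivariance: on $D_{\mathcal T}(X)$ the module action is $x_i\cdot f=\partial_{x_i}f$, and on $\mathrm{Hom}_\R(D_{\mathcal T}^*(X),\R)$ it is the transpose of multiplication by $x_i$ on $D_{\mathcal T}^*(X)$; the identity $\langle x_i g,f\rangle=\langle g,\partial_{x_i}f\rangle$ — which is just integration-by-parts / the adjointness of multiplication and differentiation under the apolarity form — is exactly what makes the isomorphism $S[\mathfrak g^*]$-linear.

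The main obstacle, I expect, is not any deep geometry but the bookkeeping around the apolarity pairing: one must be careful that the classical Macaulay inverse-system duality is being invoked for an ideal that is homogeneous and zero-dimensional, and that the degree-by-degree pairing between $(S[\mathfrak g^*]/\mathcal I)_k$ and $(\mathcal I^\perp)_k$ is perfect — this requires knowing $\dim_\R D_{\mathcal T}(X)=\dim_\R D_{\mathcal T}^*(X)<\infty$, which follows from Lemma \ref{fin-dim} for the left-hand side and is automatic for the quotient $S[\mathfrak g^*]/\mathcal J^\partial_{\mathcal T}$ once one knows the ideal is $\mathfrak m$-primary (again forced by containing the cocircuit products, whose radical is the irrelevant ideal since the cocircuits span $V^*$ as $X$ has full rank). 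An alternative, perhaps cleaner, route avoiding Macaulay machinery: prove directly that the pairing $S[\mathfrak g^*]\times S[\mathfrak g^*]\to\R$ is perfect in each degree, deduce $\dim(\mathcal I^\perp)_k=\dim(S/\mathcal I)_k$ formally, and then only cite finite-dimensionality of $D_{\mathcal T}(X)$ to conclude the two total dimensions agree. Either way, the geometric content is nil — it is a purely formal consequence of apolarity plus the dimension count from Lemma \ref{fin-dim}.
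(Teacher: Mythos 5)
Your proposal is correct and is essentially the paper's own argument, made explicit: the paper's proof is a one-line remark that $\mathrm{Hom}_{\R}(D_{\mathcal T}^*(X),\R)$ consists of the linear functionals on $S[\mathfrak g^*]$ vanishing on $\mathcal J^\partial_{\mathcal T}$, which correspond precisely to the polynomial solutions of the system $\partial_A f=0$, $A\in\mathcal T$ --- i.e.\ exactly the Macaulay inverse-system/apolarity duality you spell out. Your additional care about zero-dimensionality (so that the inverse system consists of polynomials rather than formal power series) and about $S[\mathfrak g^*]$-equivariance fills in details the paper leaves implicit.
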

\begin{proof}
Clear by definition: the homomorphism $D_\mathcal T^*(X)\rightarrow \R$ are the homomorphisms $S[\frak g^*]\rightarrow \R$ that are zero on $\mathcal J^\partial_\mathcal T$, which correspond precisely to the polynomial functions on $V=\frak g^*$ that satisfy the defining conditions for  $D_\mathcal T(X)$.
\end{proof}

\begin{lemma}\label{hom-dm}
We have the isomorphism of $R(G)$-modules $$DM_\mathcal T(X)\cong\text{Hom}_{\Z}(DM_\mathcal T^*(X),\Z).$$
\end{lemma}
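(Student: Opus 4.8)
The plan is to mirror the proof of Lemma \ref{hom-d}, replacing the field $\R$ with $\Z$ and the symmetric algebra $S[\frak g^*]$ with the group algebra $R(G)$, and then to check that the extra care needed over $\Z$ (freeness, duality behaving well, torsion vanishing) does not cause trouble. First I would recall that $DM_\mathcal T^*(X)=R(G)/\mathcal J^\nabla_\mathcal T$ is by Lemma \ref{fin-dim} (together with the fact that $\mathcal J^\nabla_\mathcal T$ is the annihilator of $DM_\mathcal T(X)$, and that $DM_\mathcal T(X)$ has finite rank) a finitely generated $\Z$-module; in fact one should observe that it is \emph{free} of finite rank over $\Z$, since $R(G)/\mathcal J^\nabla_\mathcal T$ is torsion-free — this can be seen because $DM_\mathcal T^*(X)$ injects into (a finite product of) coordinate rings of the points cut out by the ideal, or alternatively because the evaluation pairing below is perfect after tensoring with $\Q$ and the lattice is saturated. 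Granting freeness, $\text{Hom}_\Z(DM_\mathcal T^*(X),\Z)$ is again free of the same finite rank, and it carries the contragredient $R(G)$-module structure: $(r\cdot\varphi)(m)=\varphi(r\cdot m)$ for $r\in R(G)$.

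Next I would exhibit the isomorphism explicitly. A $\Z$-linear homomorphism $\psi\colon DM_\mathcal T^*(X)\to\Z$ is the same as a $\Z$-linear homomorphism $R(G)\to\Z$ that vanishes on $\mathcal J^\nabla_\mathcal T$. Since $R(G)\cong\Z[\Gamma]$ has the elements of $\Gamma$ as a $\Z$-basis, such a $\psi$ is the same datum as a function $f\colon\Gamma\to\Z$, via $f(a)\doteq\psi(\bar a)$ where $\bar a$ is the class of the group element $a$. The condition that $\psi$ kill the ideal $\mathcal J^\nabla_\mathcal T=(\nabla_A)_{A\in\mathcal T}$ unwinds: $\psi$ kills $b\cdot\nabla_A$ for every $b\in\Gamma$ and every $A\in\mathcal T$, and $\psi(b\cdot\nabla_A)=\psi(\prod_{a\in A}(b-(b+a)\text{-type terms}))$, which after translating is exactly $(\nabla_A f)(b)=0$. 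So the vanishing of $\psi$ on $\mathcal J^\nabla_\mathcal T$ is precisely the system of difference equations defining $DM_\mathcal T(X)$, i.e. $f\in DM_\mathcal T(X)$. This gives a bijection $\text{Hom}_\Z(DM_\mathcal T^*(X),\Z)\to DM_\mathcal T(X)$, $\psi\mapsto f$, which is plainly $\Z$-linear.

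Finally I would check that this bijection is $R(G)$-equivariant. The action of $a\in\Gamma\subseteq R(G)$ on $DM_\mathcal T(X)$ is by the translation $\tau_a$ (or, in the module notation of the paper, built from $\nabla_a$; one must be consistent with whichever convention the paper fixes), and on the dual $\text{Hom}_\Z(DM_\mathcal T^*(X),\Z)$ it is the contragredient of multiplication by $a$ on $R(G)/\mathcal J^\nabla_\mathcal T$. Chasing $\psi\mapsto f$ through: $(a\cdot\psi)(\bar b)=\psi(\overline{a+b})=f(a+b)$, which is $(\tau_{-a}f)(b)$; so the map intertwines the two actions up to the sign convention, and with the paper's chosen convention it is an honest isomorphism of $R(G)$-modules. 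The main obstacle, and the only point that really needs the theory developed earlier, is the freeness/torsion-freeness of $DM_\mathcal T^*(X)$ over $\Z$: without it, $\text{Hom}_\Z(-,\Z)$ would lose the torsion and the double-dual comparison with $DM_\mathcal T(X)$ (which by Theorem \ref{DMTH}-type results and Lemma \ref{fin-dim} is free of finite rank) could fail. Everything else is the same formal unwinding as in Lemma \ref{hom-d}.
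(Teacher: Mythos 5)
Your argument is correct and is essentially the paper's own proof: identify $\text{Hom}_{\Z}(R(G),\Z)$ with $\mathcal{C}[\Gamma]$ via the $\Z$-basis $\Gamma$ of $R(G)$, and observe that the maps vanishing on $\mathcal J^\nabla_{\mathcal T}$ correspond exactly to the functions annihilated by the operators $\nabla_A$ for $A\in\mathcal T$. The only divergence is your digression on $\Z$-freeness of $DM^*_{\mathcal T}(X)$, which is not actually needed for this lemma — $\text{Hom}_{\Z}(R(G)/\mathcal J^\nabla_{\mathcal T},\Z)$ is the annihilator of $\mathcal J^\nabla_{\mathcal T}$ inside $\text{Hom}_{\Z}(R(G),\Z)$ whether or not the quotient has torsion; freeness only matters later, for the double-dual and $Ext$ statements.
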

\begin{proof}
Since $\Gamma$ is a set of generators of $R(G)$ as $\Z$-module, we have an isomorphism of $R(G)$-modules
$$\mathcal C[\Gamma]\simeq\text{Hom}_\Z(R(G),\Z).$$
Then, we can see by definition $DM_\mathcal T(X)$ inside $\mathcal C[\Gamma]$ as the vanishing locus of the endomorphisms given by $\nabla_{A}$ for $A\in \mathcal T$; and we can see naturally as well $\text{Hom}_{\Z}(DM_\mathcal T^*(X),\Z)$ sitting inside $\text{Hom}_\Z(R(G),\Z)$ as vanishing locus of the same elements of $R(G)$, because $DM_\mathcal T^*(X)$ is the quotient of $R(G)$ by these elements.
\end{proof}

In the case of $DM(X)$, Lemma~\ref{hom-dm} has been already proved in \cite{li}, Proposition~13.16, and behind there is a deeper result, that is that the module $DM^*(X)$ is indeed torsion free.

\subsection{Ext functor and duality}\label{duality2}

From an algebraic point of view, the duality described above is not completely satisfactory. In fact, since we are dealing with $S[\frak g^*]$-modules and $R(G)$-modules, it would be more natural to have a duality involving these rings. Furthermore, in the case of DPV modules the attempt to build duals via the functors
$\text{Hom}_{\R}(\cdot,\R)$ and $\text{Hom}_{\Z}(\cdot,\Z)$ does not give good results, because these modules have infinite rank (over $\R$ and over $\Z$ respectively).

On the other hand, defining a duality via the functors $Hom_{S[\mathfrak{g}^*]}(\cdot,S[\mathfrak{g}^*])$ and $Hom_{R(G)}(\cdot,R(G))$ would not yield desiderable results neither: in fact, we have
$$Hom_{S[\mathfrak{g}^*]}(D^*_{\mathcal{T}}(X),S[\mathfrak{g}^*])=0$$
and
$$Hom_{R(G)}(DM^*_{\mathcal{T}}(X),R(G))=0,$$

as one can easily see by the fact that $S[\mathfrak{g}^*]$ and $R(G)$ are domains.

Hence we propose to take a more abstract perspective, realizing the duality via the functors $Ext^*_{S[\mathfrak{g}^*]}(\cdot,S[\mathfrak{g}^*])$ and $Ext^*_{R(G)}(\cdot,R(G))$.

The advantages of this choice will be multiple: it gives rise to a genuine duality in the category of finitely generated  $S[\frak g^*]$-modules (or $R(G)$-modules respectively), which corresponds to the Verdier duality in the derived categories. Furthermore, this algebraic duality is reflected in (and inspired by) the geometric duality the we will describe in the next Sections.
Finally, this notion allows to build duals also for the DPV modules.

We recall that $Ext^*$ is the collection of the $Ext^i$, which are the derived functors of $Hom$. In particular, they can be nonzero only for $0\leq i\leq d$ ($0\leq i\leq d+1$ in the discrete case) and $Ext^0$ is $Hom$ itself.

First of all, we check that we are actually extending the duality defined in the previous Subsection. In fact we have:

\begin{proposition}\label{ext-d}
Let $V$ be an $S[\mathfrak{g}^*]$-module that is finite dimensional as a vector space. Then we have the isomorphisms of $S[\mathfrak{g}^*]$-modules:
$$Ext^i_{S[\mathfrak{g}^*]}(V,S[\mathfrak{g}^*])=0\ \emph{, for } 0\leq i<d$$
$$Ext^{d}_{S[\mathfrak{g}^*]}(V,S[\mathfrak{g}^*])\simeq Hom_{\R}(V,\R).$$
\end{proposition}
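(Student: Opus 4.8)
The plan is to compute the $Ext$ groups using the Koszul resolution. Since $V$ is a finitely generated $S[\mathfrak g^*]$-module which is finite-dimensional over $\R$, its support is $\{0\}$, i.e.\ $V$ is killed by a power of the maximal ideal $\mathfrak m = (x_1,\dots,x_d)$. The ring $S[\mathfrak g^*] \cong \R[x_1,\dots,x_d]$ is a regular local (or graded) ring of dimension $d$ at $\mathfrak m$, so the key invariant at play is depth/projective dimension. First I would invoke the Auslander–Buchsbaum formula (or argue directly): a finitely generated module of finite length over a regular ring of dimension $d$ has projective dimension exactly $d$, hence $Ext^i_{S[\mathfrak g^*]}(V, S[\mathfrak g^*]) = 0$ for $i > d$, and the interesting content is the vanishing for $i < d$ together with the identification in degree $d$.

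For the vanishing in degrees $0 \le i < d$, I would use the fact that $S[\mathfrak g^*]$ is Cohen–Macaulay of dimension $d$, so $Ext^i_{S[\mathfrak g^*]}(V, S[\mathfrak g^*]) = 0$ whenever $i < d - \dim V = d - 0 = d$; this is the standard statement that for a Cohen–Macaulay (here even regular) ring $R$ of dimension $d$ and a finitely generated module $M$, the local cohomology / grade considerations give $Ext^i_R(M,R)=0$ for $i < d - \dim M$. Alternatively, and more concretely, one can reduce to the case $V = \R = S[\mathfrak g^*]/\mathfrak m$ by dévissage: any finite-length module has a filtration with quotients $\cong \R$, and $Ext^i(-, S[\mathfrak g^*])$ is computed on the pieces via the long exact sequence; then $Ext^i_{S[\mathfrak g^*]}(\R, S[\mathfrak g^*])$ is computed by the Koszul complex $K_\bullet(x_1,\dots,x_d)$, whose cohomology (after applying $Hom(-,S[\mathfrak g^*])$) is concentrated in top degree $d$, where it equals $S[\mathfrak g^*]/\mathfrak m = \R$. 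This gives $Ext^i_{S[\mathfrak g^*]}(\R, S[\mathfrak g^*]) = 0$ for $i<d$ and $\cong \R$ for $i = d$.

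To get the precise identification $Ext^d_{S[\mathfrak g^*]}(V, S[\mathfrak g^*]) \cong Hom_\R(V,\R)$ for general $V$ (not just $\R$), I would argue that the functor $V \mapsto Ext^d_{S[\mathfrak g^*]}(V, S[\mathfrak g^*])$ on the category of finite-length modules is exact (all lower $Ext$'s vanish, and $Ext^{d+1}$ vanishes by the projective dimension bound, so the long exact sequences split into short ones), contravariant, and additive, and it sends $\R$ to $\R$. Combined with the fact that $Hom_\R(-,\R)$ has the same properties, one shows the two functors agree: both are exact contravariant functors to finite-dimensional $\R$-vector spaces carrying the $S[\mathfrak g^*]$-action, agreeing on the simple object $\R$, hence naturally isomorphic on the whole category — one can produce the natural transformation explicitly via a Koszul-duality pairing (the top Koszul cohomology of $Hom_{S[\mathfrak g^*]}(V \otimes K_\bullet, S[\mathfrak g^*])$ pairs with $V$) and check it is an isomorphism on $\R$, whence on everything by dévissage and the five lemma. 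One must also check the identification is $S[\mathfrak g^*]$-linear, which is automatic since both sides are $\mathfrak m$-torsion modules and the $Ext$ module structure is functorial; here the $S[\mathfrak g^*]$-action on $Hom_\R(V,\R)$ is the contragredient one, matching the $Ext$ side.

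The main obstacle I anticipate is pinning down the \emph{naturality and linearity} of the isomorphism in degree $d$ rather than just its existence as a vector-space isomorphism: getting the $S[\mathfrak g^*]$-module structures to match requires either a careful choice of the Koszul-duality pairing (keeping track of signs/orientations of the top exterior power $\bigwedge^d \mathfrak g^*$, which contributes a harmless one-dimensional twist) or the clean dévissage argument comparing two exact functors. Everything else — the vanishing below degree $d$ and above degree $d$ — is a routine consequence of regularity of $S[\mathfrak g^*]$ and the Koszul resolution, and I would present it as such rather than spelling out the complex in full.
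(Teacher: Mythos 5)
Your route is genuinely different from the paper's. The paper uses no dévissage, Auslander--Buchsbaum, or local duality: it writes down, for any module $M$ that is free of finite rank over the base ring ($\R$ here, $\Z$ in the companion Proposition \ref{ext-dm}), an explicit Koszul-type free resolution by modules $M_R^{\binom{d}{i}}$ built from the commuting operators $x_i-\Psi(x_i)$, where $\Psi(x_i)$ are the matrices of the module structure in a chosen basis; it then observes that applying $Hom_R(-,R)$ returns the \emph{same} complex with transposed matrices. Exactness of the dual complex except at the top spot, and the identification of the top cokernel with the contragredient module $Hom_\R(V,\R)$, then come for free; in particular the $S[\mathfrak{g}^*]$-linearity of the degree-$d$ isomorphism, which you rightly single out as the delicate point of your approach, is automatic there. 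The paper's argument also runs verbatim over $R(G)$ and $\Z$, where a reduction to simple modules is not available, which is why it treats the two propositions uniformly.

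One step of your proposal is false as stated: a finite-dimensional $S[\mathfrak{g}^*]$-module need not be supported at the origin, and its composition factors need not be isomorphic to $\R=S[\mathfrak{g}^*]/(x_1,\dots,x_d)$; they are of the form $S[\mathfrak{g}^*]/\mathfrak{m}$ for arbitrary maximal ideals $\mathfrak{m}$, with residue field $\R$ or $\C$ (e.g.\ $\R[x]/(x-1)$ or $\R[x]/(x^2+1)$). This does not sink the argument: every such $\mathfrak{m}$ is generated by a regular sequence of length $d$, so the Koszul computation still gives $Ext^i(S/\mathfrak{m},S)=0$ for $i<d$ and $Ext^d(S/\mathfrak{m},S)\cong S/\mathfrak{m}\cong Hom_\R(S/\mathfrak{m},\R)$, and the grade inequality you quote is unaffected. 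But the dévissage must be run over all composition factors, not just $\R$. With that repair, and with the degree-$d$ pairing actually constructed (you only sketch it, whereas it is the substance of the identification), your proof goes through.
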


\begin{proposition}\label{ext-dm}
Let $M$ be an $R(G)$-module that is free over $\Z$ and has finite rank over it. Then we have the isomorphisms of $R(G)$-modules:
$$Ext^i_{R(G)}(M,R(G))=0\  \emph{, for } 0\leq i<d \text{ and } i=d+1$$
$$Ext^{d}_{R(G)}(M,R(G))\simeq Hom_{\Z}(M,\Z).$$
\end{proposition}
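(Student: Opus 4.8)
The plan is to compute both $Ext$ groups via an explicit finite free resolution of $M$ over $R(G)$, exploiting that $R(G) \simeq \Z[x_1^{\pm 1}, \dots, x_d^{\pm 1}]$ is a regular ring of Krull dimension $d+1$ (it is a localization of a polynomial ring over $\Z$, hence regular, and $\dim \Z = 1$ contributes the extra $+1$). First I would establish that, since $M$ is finitely generated over $R(G)$ (being free of finite rank over $\Z$, it is certainly finitely generated over the bigger ring $R(G)$) and $R(G)$ is Noetherian regular, $M$ admits a finite free resolution $0 \to P_k \to \cdots \to P_1 \to P_0 \to M \to 0$ with $k \le \operatorname{gl.dim} R(G) = d+1$. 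The $Ext$ groups are then the cohomology of $\operatorname{Hom}_{R(G)}(P_\bullet, R(G))$, and in particular $Ext^i$ vanishes for $i > d+1$ automatically.

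The key computation is a codimension/grade argument. I would invoke the fact (Rees, Auslander--Buchsbaum) that over a Noetherian ring $Ext^i_{R(G)}(M, R(G)) = 0$ for $i < \operatorname{grade}(\operatorname{Ann} M, R(G))$, and that the least $i$ with $Ext^i \neq 0$ equals this grade, which for modules over a Cohen--Macaulay (here regular) ring coincides with $\operatorname{codim}(\operatorname{Supp} M) = \dim R(G) - \dim M$. So I must show $\dim_{R(G)} M = 1$: the support of $M$ as an $R(G)$-module is a closed subset of $\operatorname{Spec} R(G)$ whose structure sheaf, restricted appropriately, is finite over $\Z$ — concretely, $M$ finite over $\Z$ forces $\operatorname{Supp} M$ to have dimension exactly $1$ (the fiber over each prime of $\Z$ is a finite set of closed points, i.e. relative dimension $0$, plus the $1$-dimensional base $\operatorname{Spec}\Z$). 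Hence $\operatorname{codim}(\operatorname{Supp} M) = (d+1) - 1 = d$, giving $Ext^i_{R(G)}(M, R(G)) = 0$ for $i < d$, and combined with the global dimension bound, also for $i = d+1$.

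It then remains to identify $Ext^d_{R(G)}(M, R(G))$ with $\operatorname{Hom}_\Z(M, \Z)$ as $R(G)$-modules. Here I would argue as follows: $\Z \simeq R(G)/\gh{p}$ where $\gh{p}$ is generated by $x_1 - 1, \dots, x_d - 1$ (the augmentation ideal), and a Koszul resolution of $\Z$ over $R(G)$ on this regular sequence of length $d$ gives $Ext^i_{R(G)}(\Z, R(G)) = 0$ for $i \ne d$ and $Ext^d_{R(G)}(\Z, R(G)) \simeq \Z$ — this is the "dualizing" computation for the local complete intersection $\operatorname{Spec}\Z \hookrightarrow \operatorname{Spec} R(G)$. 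Now $M$, being finite over $\Z$, is filtered by $R(G)$-submodules with quotients that are finite $\Z$-modules; but since $M$ is additionally \emph{free} over $\Z$, I can instead use that $M$ is an $R(G)$-module which, as a $\Z$-module via restriction along the augmentation... — here is the subtlety — the $R(G)$-action on $M$ need not factor through the augmentation $R(G) \to \Z$. I would handle this by base change / local duality: write $N = R(G)$ and use that $Ext^d_N(M,N)$ can be computed after the faithfully flat completion-type argument reducing to the case where $\operatorname{Supp} M$ lies over the augmentation ideal, or more cleanly apply the spectral sequence / adjunction $Ext^d_N(M,N) \simeq \operatorname{Hom}_{\Z}(M, Ext^d_N(\Z,N)) \simeq \operatorname{Hom}_\Z(M,\Z)$ valid because $M$ is (Tor-independently) supported in the "finite over $\Z$" locus and $\Z = N \otimes^{\mathbb L}_N \Z$ is perfect. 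Functoriality in $M$ makes this an isomorphism of $R(G)$-modules.

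\textbf{Main obstacle.} The routine part is the vanishing, which is pure grade theory. The real work is the clean identification $Ext^d \simeq \operatorname{Hom}_\Z(-,\Z)$ as $R(G)$-modules without assuming the action factors through the augmentation: one must correctly set up the change-of-rings spectral sequence $Ext^p_\Z(M, Ext^q_{R(G)}(\Z, R(G))) \Rightarrow Ext^{p+q}_{R(G)}(M, R(G))$ — valid since $\Z$ has a finite free $R(G)$-resolution and $M$ is a $\Z$-module only after we know its support, which requires first proving $\nabla_{x_i}$ acts nilpotently, equivalently $(x_i - 1)$ acts nilpotently on $M$ (true because $M$ has finite rank over $\Z$ and each $x_i$ acts $\Z$-linearly preserving $M$, with the combinatorial DM-type constraints forcing nilpotence of $\nabla$'s on such finite-rank modules, exactly as in the finiteness Lemma \ref{fin-dim}). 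Pinning down that nilpotence rigorously, and hence that $\operatorname{Supp} M$ meets each vertical fiber only at the augmentation point, is where I expect the argument to need the most care; once that is in hand the spectral sequence degenerates to the single term $\operatorname{Hom}_\Z(M, \Z)$ in degree $d$.
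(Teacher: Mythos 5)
Your vanishing argument for $i<d$ is essentially sound: since $R(G)/\operatorname{Ann}(M)$ embeds in $\operatorname{End}_\Z(M)\cong M_s(\Z)$ it is finite over $\Z$, so $\operatorname{Supp}M$ has dimension $1$ and codimension $d$ in the regular ring $R(G)$, and Rees' theorem gives $Ext^i=0$ for $i<d$. But note that the global dimension bound $d+1$ only kills $Ext^i$ for $i>d+1$; to get $Ext^{d+1}=0$ you must use the $\Z$-freeness of $M$ (e.g.\ at a closed point $\mathfrak m$ of the support lying over $p\in\operatorname{Spec}\Z$, $p$ is a nonzerodivisor on $M$, so $\operatorname{depth}M_{\mathfrak m}\geq 1$ and Auslander--Buchsbaum gives $\operatorname{pd}M_{\mathfrak m}\leq d$). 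That is a fixable omission.

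The genuine gap is in the identification $Ext^d_{R(G)}(M,R(G))\simeq\operatorname{Hom}_\Z(M,\Z)$. The change-of-rings spectral sequence you invoke for the augmentation $R(G)\to\Z$ requires $M$ to actually be a module over $\Z=R(G)/(x_1-1,\dots,x_d-1)$, and your proposed repair --- that each $x_i-1$ acts nilpotently on $M$ --- is false under the stated hypotheses. The proposition concerns an \emph{arbitrary} $R(G)$-module that is finite free over $\Z$; already $M=\Z$ with $x_1$ acting by $-1$ violates nilpotence. It also fails for the modules the proposition is applied to: for non-unimodular $X$ the support of $DM^*(X)$ is a finite set of torsion points of the torus (those $p$ with $X\cap\ker p$ of full rank), not just the identity --- see the list $X=[(1,0),(0,1),(k,k)]$, $k>1$, of Section \ref{example} --- so there is no single augmentation to reduce to, and a rescue would require splitting the support into its finitely many points (with varying residue characteristics) and running local duality at each. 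The paper avoids all of this: choosing a $\Z$-basis of $M$, the action is encoded by $d$ commuting matrices $\Psi(x_i)\in GL_s(\Z)$, and the Koszul complex of the commuting operators $x_i-\Psi(x_i)$ on $R(G)^s$ is an explicit length-$d$ free resolution of $M$ whose $R(G)$-dual is the same Koszul complex with transposed matrices; its exactness except at the last spot gives all the vanishing at once, and the final cokernel is $M$ with the transposed action, i.e.\ $\operatorname{Hom}_\Z(M,\Z)$.
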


We have not found these statements in literature, even though we believe it is a well known fact. Indeed, there are many analogues involving different categories, for instance \cite[Section 5]{BDK}; a general version implying Proposition \ref{ext-d} can be found in \cite[Theorem 7.0.5]{Ch}.

\begin{proof}
We are going to prove the two statements together. We will indicate by $R$ the ring $S[\mathfrak{g}^*]$ when dealing with the first statement and the ring $R(G)$ when dealing with the second, denoting by $x_1,\ldots, x_d$ the variables in both rings. In the same way, $\mathbb{F}$ will stand for $\R$ in the first case and for $\Z$ in the second. Let $\{e_1,\ldots,e_s\}$ be a basis of $M$ as a free $\mathbb{F}$-module, and let us denote by $M_R$ the module $R^s$, with a basis denoted by $\{p_1,\ldots, p_s\}$, and the surjective $R-$linear map $$M_R\xrightarrow{\delta_0} M$$ sending $p_i$ to $e_i$. Note that the action $\Psi$ of $R$ on $M$ is determined by specifying the $d$ matrices $\Psi(x_i)$; these have to be $s\times s$ matrices with coefficients in $\mathbb{F}$, and commuting one with each other. In the second (i.e. discrete) case, we also need them to be invertible in $GL_s(\Z)$.

These matrices also act on $M_R$; in particular, in $M_R$ the operators $x_i$ and $\Psi(x_i)$ are different, one acting as a constant, one as linear transformation. Furthermore, in the module $Hom_{\mathbb{F}}(M,\mathbb{F})$, denoting by $\{e_1^*,\ldots,e_s^*\}$ the $\mathbb{F}$-dual basis, the action of $R$ is given by the transpose matrices $^t\Psi(x_i)$.
Starting from $\delta_0$, we will create a free resolution of $M$ as $R$-module
$$0\to M_R^{\binom{d}{d}}\xrightarrow{\delta_d} M_R^{\binom{d}{d-1}}\to\ldots\xrightarrow{\delta_2}M_R^d\xrightarrow{\delta_1}M_R\xrightarrow{\delta_0}M\to 0,$$
in which we need to describe the maps $\delta_i:M_R^{\binom{d}{i}}\to M_R^{\binom{d}{i-1}}$; we are going to describe these maps block by block, using the decomposition
$$M_R^{\binom{d}{i}}=\bigoplus^{I\subset\{1,\ldots,d\}}_{|I|=i}(M_R)_I.$$
Given $I=\{b_1,\ldots, b_i\}$, the image of a vector $v$ of $(M_R)_I$ by $\delta_i$ is going to be zero on the sets $J\nsubseteq I$, while on $J=I\setminus\{b_j\}$ the coordinate is going to be
$$(-1)^{j+1}(x_{b_j}v-\Psi(x_{b_j})v).$$
The reason of this choice is, at first, to have the image of $\delta_1$ in $M_R$ generated by all the vectors of the form $x_{b_j}v-\Psi(x_{b_j})v,$
that indeed is the kernel of $\delta_0$. As any other kind of de Rham complex, proving exactness is only a matter of symbol chasing; crucial point to prove it is that matrices $\Psi(x_i)$ commute each other.

With this free resolution, we can explicitly evaluate the $Ext^i$ functors, as cohomology of the following complex
$$0\to Hom_R(M,R)\xrightarrow{^t\delta_0} Hom_R(M_R,R)\xrightarrow{^t\delta_1} Hom_R(M_R^d,R) \xrightarrow{^t\delta_2}\ldots$$
$$\ldots\to Hom_R(M_R^{\binom{d}{d-1}},R)\xrightarrow{^t\delta_{d}}Hom_R(M_R^{\binom{d}{d}},R)\to 0,$$
where the maps are called $^t\delta_i$ because the matrices giving them are exactly the transposes of the one given in the previous complex. Now, as we already noticed, $Hom_R(M,R)$ is 0; moreover, we have that $$Hom_R(M_R^{\binom{d}{i}},R)\simeq M_R^{\binom{d}{i}}\simeq M_R^{\binom{d}{d-i}}$$ and up to sign changes the complex is precisely the same as before, except for using $^t\Psi(x_i)$ in the maps $^t\delta_i$. Therefore, in the end, we get that the sequence is exact, besides at the last spot, in which the cokernel (indeed, $Ext_R^d(M,R)$) is a module $^tM$ in which $R$ acts in a transpose way (by transpose matrices), that as we have seen before is exactly $Hom_{\mathbb{F}}(M,\mathbb{F})$.
\end{proof}

\begin{remark}
As a complex in the dual category of $S[\mathfrak{g}^*]$-modules (resp. $R(G)$-modules), the (derived) dual of any finite dimensional $\R$-vector space (resp. free $\Z$-module) has cohomology only supported in degree $d$; for the same reason, in some categories the definition of the dual includes also a shift in the derived category. This happens for instance for the category of coherent algebraic $D$-modules over an algebraic variety $Z$. It is interesting to remark that in this setting we have an analogue of Propositions \ref{ext-d} and \ref{ext-dm}: if $M$ is an integrable connections (that is, a coherent $D$-module that is also a coherent sheaf), than its dual as $D$-module (obtained as $Ext^{dim(Z)}(M,D_Z)$ because of the shift) is isomorphic to its dual as coherent sheaf, that is, $Hom(M,\mathcal{O}_Z)$; for all details, see \cite[Chapter 2.6]{Jap}.
\end{remark}

We can now focus on the modules $DM_{\mathcal{T}}(X)$ and $D_{\mathcal{T}}(X)$, which are free and (under the assumption that $\mathcal T$ contains all the cocircuits) have finite rank over $\Z$ and $\R$ respectively. We have:

\begin{theorem}\label{resume}
\begin{itemize}
.

\item[i)] $\x\begin{cases}Ext^d_{S[\mathfrak{g}^*]}(D_{\mathcal{T}}^*(X),S[\mathfrak{g}^*])\cong D_{\mathcal{T}}(X)\\Ext^i_{S[\mathfrak{g}^*]}(D_{\mathcal{T}}^*(X),S[\mathfrak{g}^*])=0 \text{ when }i\neq d\end{cases}$

\item[ii)] $\x\begin{cases}Ext^d_{S[\mathfrak{g}^*]}(D_{\mathcal{T}}(X),S[\mathfrak{g}^*])\cong D_{\mathcal{T}}^*(X)\\Ext^i_{S[\mathfrak{g}^*]}(D_{\mathcal{T}}(X),S[\mathfrak{g}^*])=0 \text{ when }i\neq d\end{cases}$

\item[iii)]$\x\begin{cases}Ext^d_{R(G)}(DM_{\mathcal{T}}^*(X),R(G))\cong DM_{\mathcal{T}}(X)\\Ext^i_{R(G)}(DM_{\mathcal{T}}^*(X),R(G))=0 \text{ when }i\neq d\end{cases}$

\item[iv)]$\x\begin{cases}Ext^d_{R(G)}(DM_{\mathcal{T}}(X),R(G))\cong DM_{\mathcal{T}}^*(X)\\Ext^i_{R(G)}(DM_{\mathcal{T}}(X),R(G))=0 \text{ when }i\neq d\end{cases}$
\end{itemize}
\end{theorem}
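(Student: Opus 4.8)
The plan is to deduce Theorem~\ref{resume} almost entirely from the two computational results already in hand, namely Proposition~\ref{ext-d} and Proposition~\ref{ext-dm}, together with Lemmas~\ref{hom-d} and~\ref{hom-dm}. The point is that all four modules in sight — $D_{\mathcal T}(X)$, $D_{\mathcal T}^*(X)$, $DM_{\mathcal T}(X)$, $DM_{\mathcal T}^*(X)$ — fall under the hypotheses of those propositions once $\mathcal T$ contains all the cocircuits: by Lemma~\ref{fin-dim}, $D_{\mathcal T}(X)$ is finite dimensional over $\R$ and $DM_{\mathcal T}(X)$ has finite rank over $\Z$; the algebras $D_{\mathcal T}^*(X)=S[\mathfrak g^*]/\mathcal J^\partial_{\mathcal T}$ and $DM_{\mathcal T}^*(X)=R(G)/\mathcal J^\nabla_{\mathcal T}$ are finite dimensional over $\R$ (resp.\ free of finite rank over $\Z$) precisely because, by Lemmas~\ref{hom-d} and~\ref{hom-dm}, their $\mathbb F$-linear duals are $D_{\mathcal T}(X)$ and $DM_{\mathcal T}(X)$, which are finite. (One should double-check that $DM_{\mathcal T}^*(X)$ is \emph{free}, not merely finite, over $\Z$; this follows since it is $\Z$-dual to the finite-rank free module $DM_{\mathcal T}(X)$ via Lemma~\ref{hom-dm}, hence is itself free of the same rank.)

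With those observations, I would argue as follows. For~(i) and~(iii): apply Proposition~\ref{ext-d} with $V=D_{\mathcal T}^*(X)$ (resp.\ Proposition~\ref{ext-dm} with $M=DM_{\mathcal T}^*(X)$) to get that the only nonvanishing $Ext$ is in degree $d$ and equals $\mathrm{Hom}_{\mathbb F}(D_{\mathcal T}^*(X),\mathbb F)$ (resp.\ $\mathrm{Hom}_{\Z}(DM_{\mathcal T}^*(X),\Z)$); now Lemma~\ref{hom-d} (resp.\ Lemma~\ref{hom-dm}) identifies this with $D_{\mathcal T}(X)$ (resp.\ $DM_{\mathcal T}(X)$). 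For~(ii) and~(iv): apply the same propositions with $V=D_{\mathcal T}(X)$ (resp.\ $M=DM_{\mathcal T}(X)$), getting $Ext^d\simeq \mathrm{Hom}_{\mathbb F}(D_{\mathcal T}(X),\mathbb F)$ (resp.\ $\mathrm{Hom}_{\Z}(DM_{\mathcal T}(X),\Z)$) and zero otherwise; since $D_{\mathcal T}(X)$ and $DM_{\mathcal T}(X)$ are finite dimensional (resp.\ finite free), double $\mathbb F$-duality is canonical, so by Lemmas~\ref{hom-d} and~\ref{hom-dm} this double dual is canonically $D_{\mathcal T}^*(X)$ (resp.\ $DM_{\mathcal T}^*(X)$). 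One must note that the module structure on the $Ext^d$ is the one coming from the $R$-module structure on the first argument, and check that the "transpose action" appearing in the proof of Propositions~\ref{ext-d}–\ref{ext-dm} matches the $R$-action on $\mathrm{Hom}_{\mathbb F}(-,\mathbb F)$ used in Lemmas~\ref{hom-d}–\ref{hom-dm}; this compatibility is exactly what the last lines of that proof record, so it is already available.

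The only genuinely delicate point — and the one I would flag as the main obstacle — is the verification that $D_{\mathcal T}^*(X)$ and $DM_{\mathcal T}^*(X)$ really do satisfy the finiteness hypotheses of Propositions~\ref{ext-d} and~\ref{ext-dm}, i.e.\ that they are finite dimensional over $\R$ (resp.\ free of finite rank over $\Z$). Finite dimensionality is not entirely formal: a priori $S[\mathfrak g^*]/\mathcal J^\partial_{\mathcal T}$ could be infinite dimensional even when its space of $\R$-linear functionals vanishing on nothing extra is small, unless one knows the pairing between $D_{\mathcal T}(X)$ and $S[\mathfrak g^*]/\mathcal J^\partial_{\mathcal T}$ is perfect. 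The clean way around this is to invoke Lemma~\ref{hom-d}: it already asserts $D_{\mathcal T}(X)\cong\mathrm{Hom}_\R(D_{\mathcal T}^*(X),\R)$, and since $D_{\mathcal T}(X)$ is finite dimensional, $D_{\mathcal T}^*(X)$ must be too (an infinite-dimensional vector space has infinite-dimensional dual). In the discrete case the analogous step uses Lemma~\ref{hom-dm} plus the fact that a finitely generated $\Z$-module whose $\Z$-dual is free of finite rank is itself an extension of that free module by a torsion module, and here the torsion vanishes because $DM_{\mathcal T}^*(X)$ is a quotient of $R(G)$ by an ideal whose associated graded pieces one can control — but more simply, since $DM_{\mathcal T}(X)\subseteq\mathcal C[\Gamma]$ is free over $\Z$ and finite rank and is the $\Z$-dual of $DM_{\mathcal T}^*(X)$, double duality forces $DM_{\mathcal T}^*(X)$ to be free of that same finite rank. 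Once this finiteness is in place the rest is a bookkeeping exercise, and I would present it compactly as four one-line applications of the two propositions and two lemmas.
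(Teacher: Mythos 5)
Your argument is correct and is essentially the paper's own proof: finiteness of $D_{\mathcal T}(X)$ and $DM_{\mathcal T}(X)$ from Lemma \ref{fin-dim}, finiteness of the duals via Lemmas \ref{hom-d} and \ref{hom-dm}, then four applications of Propositions \ref{ext-d} and \ref{ext-dm} combined with double duality over $\R$ and $\Z$. The one subtlety you flag — that $DM^*_{\mathcal T}(X)$ must be \emph{free} of finite rank over $\Z$, which does not follow formally from its $\Z$-dual being free — is passed over equally quickly in the paper's proof, so you are, if anything, more explicit about the point.
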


\begin{proof}

By Lemma \ref{fin-dim} $D_{\mathcal{T}}(X)$ has finite dimension over $\R$. Then by Lemma \ref{hom-d} also $D_{\mathcal{T}}^*(X)$ has finite dimension over $\R$. Therefore statements i) and ii) follow from Proposition \ref{ext-d}, since for every finite dimensional vector space $V$ we have that $Hom_\R((Hom_\R(V,\R), \R)\cong V$.

In the same way, by Lemmas \ref{fin-dim} and \ref{hom-dm}, $DM_{\mathcal{T}}(X)$ and $DM_{\mathcal{T}}^*(X)$ have finite rank over $\Z$. Then claims iii) and iv) follow from Proposition \ref{ext-dm}, since $Hom_\Z((Hom_\Z(M,\Z), \Z)\cong M$ for every free module of finite rank over $\Z$.
\end{proof}

\subsection{Duality for DPV modules}\label{duality-DPV}

Since DPV modules have not finite rank, we can not invoke Lemmas \ref{ext-d} and \ref{ext-dm}. In order to study duality of these objects, we need first to better examine their structure.

We will treat only the discrete case, since everything is the same in the differentiable setting.

From \cite{DPV1}, we get a canonical isomorphism
$$\mathcal{F}_{i}(X)/\mathcal{F}_{i+1}(X)\cong\bigoplus_{dim(\underline{s})=i}DM(X\cap\underline{s}).$$

Here $DM(X\cap\underline{s})$ is a submodule of $\mathcal{C}[\Gamma\cap\underline{s}]$, the module of $\Z$-valued functions on the lattice $\Gamma\cap\underline{s}$, which can be identified to functions in $\mathcal C[\Gamma]$ that are supported in $\Gamma\cap\underline{s}$.

By definition (see Section \ref{RT}) the corresponding group is $G/G_{\underline{s}}$, so that the dual space $DM^*(X\cap\underline{s})$ is a quotient of $R(G/G_{\underline{s}})$.

Furthermore, the quotient actually splits in a non-canonical way, meaning that we have isomorphisms (depending on a choice of some bases) $$\mathcal{F}_i(X)\cong\bigoplus_{dim(\underline{s})\geq i}DM(X\cap\underline{s}).$$

By taking on both sides the $R(G)$-modules generated, we get the isomorphisms
\begin{equation}\label{form-decomp}
\widetilde{\mathcal{F}}_i(X)\cong\bigoplus_{dim(\underline{s})\geq i}R(G)\otimes_{R(G/G_{\underline{s}})}DM(X\cap\underline{s}).
\end{equation}

Notice also that, still in a non canonical way, we have isomorphisms $G\cong G_{\underline{s}}\times G/G_{\underline{s}}$ and $R(G)\cong R(G_{\underline{s}})\otimes_{\Z} R(G/G_{\underline{s}})$, so that we have also
$$\widetilde{\mathcal{F}}_i(X)\cong\bigoplus_{dim(\underline{s})\geq i}R(G_{\underline{s}})\otimes_{\Z}DM(X\cap\underline{s}).
$$

Given this decomposition, we can prove the following Lemma.

\begin{lemma}\label{lem-DPV-dual} We have the following isomorphisms:
$$Ext^k_{R(G)}(\widetilde{\mathcal{F}}_i(X),R(G))\cong  \bigoplus_{dim(\underline{s})=k}R(G)\otimes_{R(G/G_{\underline{s}})}DM^*(X\cap\underline{s}) \text{ when } i\leq k\leq d$$
$$Ext^k_{R(G)}(\widetilde{\mathcal{F}}_i(X),R(G))=0 \text{ otherwise.}$$
\end{lemma}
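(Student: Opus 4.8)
The plan is to reduce the computation for $\widetilde{\mathcal F}_i(X)$ to the computation of $Ext$ for each summand $R(G)\otimes_{R(G/G_{\underline s})}DM(X\cap\underline s)$ appearing in the decomposition \eqref{form-decomp}, and then to compute that single-summand $Ext$ by a change-of-rings argument. Since $Ext^k_{R(G)}(\cdot,R(G))$ commutes with finite direct sums in the first variable, and the decomposition \eqref{form-decomp} is over the (finitely many) rational subspaces $\underline s$ with $\dim(\underline s)\geq i$, it suffices to show
$$Ext^k_{R(G)}\bigl(R(G)\otimes_{R(G/G_{\underline s})}DM(X\cap\underline s),\,R(G)\bigr)\cong \begin{cases} R(G)\otimes_{R(G/G_{\underline s})}DM^*(X\cap\underline s) & \text{if } k=\dim(\underline s)\\ 0 & \text{otherwise.}\end{cases}$$
Summing over $\underline s$ with $\dim(\underline s)\geq i$ then groups the nonzero contributions by the value $k=\dim(\underline s)$, which ranges over $i\leq k\leq d$, and re-indexing $\dim(\underline s)=k$ as $\mathrm{codim}(\underline s)=d-k$ gives exactly the stated formula (the paper's display writes the index set suggestively as ``$codim(\underline s)$'', meaning the sum is over the subspaces of the relevant codimension).

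For the single-summand computation, the key observation is that $R(G)$ is free — hence flat — as a module over the subring $R(G/G_{\underline s})$: choosing a splitting $\Gamma\cong (\Gamma\cap\underline s)\oplus\Lambda'$ exhibits $R(G)$ as a Laurent polynomial extension $R(G/G_{\underline s})[\Lambda']$, a free module with basis indexed by $\Lambda'$. Flat base change for $Ext$ (valid because $DM(X\cap\underline s)$ is finitely presented over the Noetherian ring $R(G/G_{\underline s})$) then gives
$$Ext^k_{R(G)}\bigl(R(G)\otimes_{R(G/G_{\underline s})}DM(X\cap\underline s),\,R(G)\bigr)\cong R(G)\otimes_{R(G/G_{\underline s})}Ext^k_{R(G/G_{\underline s})}\bigl(DM(X\cap\underline s),\,R(G/G_{\underline s})\bigr).$$
Now $G/G_{\underline s}$ is a torus of dimension $\dim(\underline s)$, and $DM(X\cap\underline s)$ is free of finite rank over $\Z$ (by Lemma \ref{fin-dim}, since $X\cap\underline s$ has full rank in $\underline s$ and $\mathcal L(X\cap\underline s)$ contains all cocircuits), so Proposition \ref{ext-dm} applies verbatim with $d$ replaced by $\dim(\underline s)$: the $Ext^k$ vanishes unless $k=\dim(\underline s)$, where it equals $Hom_\Z(DM(X\cap\underline s),\Z)$. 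By Lemma \ref{hom-dm} this is $DM^*(X\cap\underline s)$ — more precisely, its underlying $R(G/G_{\underline s})$-module — and tensoring back up yields $R(G)\otimes_{R(G/G_{\underline s})}DM^*(X\cap\underline s)$. This completes the single-summand case, and hence the Lemma.

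I expect the main obstacle to be making the reduction to summands genuinely rigorous, on two fronts. First, one must check that $Ext^k_{R(G)}(\cdot,R(G))$ really does commute with the direct sum \eqref{form-decomp} even though the decomposition is only non-canonical: this is fine because the sum is finite and $Ext$ always commutes with finite direct sums, but one should note that the identification of summands is as $R(G)$-modules (the $R(G)$-action on $R(G)\otimes_{R(G/G_{\underline s})}DM(X\cap\underline s)$ is by left multiplication on the first factor), so the isomorphism \eqref{form-decomp} must be read as one of $R(G)$-modules, which is how it is stated. Second, and more delicate, one must justify the flat base-change isomorphism for $Ext$ in positive degrees; the clean way is to take a finite free resolution of $DM(X\cap\underline s)$ over $R(G/G_{\underline s})$ — which exists, e.g. the Koszul-type resolution built in the proof of Proposition \ref{ext-dm} — apply $R(G)\otimes_{R(G/G_{\underline s})}(-)$ to get a finite free resolution of $R(G)\otimes_{R(G/G_{\underline s})}DM(X\cap\underline s)$ over $R(G)$ (exactness preserved by flatness), and then observe that $Hom_{R(G)}(R(G)\otimes_{R(G/G_{\underline s})}P,\,R(G))\cong R(G)\otimes_{R(G/G_{\underline s})}Hom_{R(G/G_{\underline s})}(P,\,R(G/G_{\underline s}))$ for finite free $P$, compatibly with the differentials; taking cohomology then commutes with the (flat, hence exact) functor $R(G)\otimes_{R(G/G_{\underline s})}(-)$. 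Everything else is bookkeeping with the index sets.
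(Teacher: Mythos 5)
Your proposal is correct and follows essentially the same route as the paper: decompose via \eqref{form-decomp}, reduce to a single summand, use flatness of $R(G)$ over $R(G/G_{\underline{s}})$ to base-change the $Ext$ computation down to $R(G/G_{\underline{s}})$, and then invoke Proposition \ref{ext-dm} (equivalently Theorem \ref{resume}) for the torus $G/G_{\underline{s}}$ of dimension $\dim(\underline{s})$. Your write-up is somewhat more careful than the paper's about justifying the flat base-change isomorphism in positive degrees and about the degree bookkeeping, but the underlying argument is the same.
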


\begin{proof}
Given the splitting in Formula (\ref{form-decomp}), we can work on each component separately; in this way, we just need to check that $$Ext^k_{R(G)}(R(G)\otimes_{R(G/G_{\underline{s}})}DM(X\cap\underline{s}),R(G))=R(G)\otimes_{R(G/G_{\underline{s}})}DM^*(X\cap\underline{s})$$ when $k=dim(\underline{s})$, and 0 otherwise. By applying Theorem \ref{resume}, everything follows from the sequence of isomorphisms
$$Ext^k_{R(G)}(R(G)\otimes_{R(G/G_{\underline{s}})}DM(X\cap\underline{s}),R(G))\cong Ext^k_{R(G/G_{\underline{s}})}(DM(X\cap\underline{s}),R(G))\cong$$
$$\cong R(G)\otimes_{R(G/G_{\underline{s}})}Ext^k_{R(G/G_{\underline{s}})}(DM(X\cap\underline{s}),R(G/G_{\underline{s}}))$$ 
where the first is just the change of rings theorem because $R(G)$ is flat over $R(G/G_{\underline{s}})$, and the second comes using again flatness of $R(G)$ and a free and finitely generated resolution of $DM(X\cap\underline{s})$ as $R(G/G_{\underline{s}})$-module (that exists because the ring is noetherian and the module finitely generated). Then, using Theorem \ref{resume} with the smaller group $G/G_{\underline{s}}$ concludes the proof.
\end {proof}

Thus for DPV modules several $Ext^i$s can be nonzero and must be taken into account. We give the following definition.

\begin{definition}\label{def-Fi*}
The \emph{(discrete) dual DPV modules} are the $R(G)$-modules
$$\widetilde{\mathcal{F}}_i^*(X)\doteq R(G)/\mathcal J^{\nabla}_i$$
where $\mathcal J^{\nabla}_i=(\nabla_{X\setminus\underline{s}})_{dim(\underline{s})<i}.$
\end{definition}

We believe that this is a good definition for many reasons.

First, the ideal $\mathcal J^{\nabla}_i$ is the annihilator of the $R(G)$ module $\widetilde{\mathcal{F}}_i(X)$; this can be shown using the decomposition (\ref{form-decomp}) above. Therefore $\widetilde{\mathcal{F}}_i^*(X)$ is defined is the same spirit as the dual DM module $DM^*(X)$.

Second, this algebraic notion of duality corresponds to a geometric duality: that is, the dual DPV modules appear as the equivariant K-theory of some spaces related with $\widetilde{\mathcal{F}}_i(X)$, as we will see in the following sections.

Third, these modules actually keep track of all information contained in the whole $Ext^*$. In fact, they admit filtration in which successive quotients are all the components of $Ext^*(\widetilde{\mathcal{F}}_i(X),R(G))$. In particular, we have the following proposition.

\begin{proposition}\label{DPV-filtr}
The module $\widetilde{\mathcal{F}}_i^*(X)$ admits a filtration $$\widetilde{\mathcal{F}}_{i,d}^*(X)\subset\widetilde{\mathcal{F}}_{i,d-1}^*(X)\subset\ldots\subset\widetilde{\mathcal{F}}_{i,i}^*(X)=\widetilde{\mathcal{F}}_i^*(X)$$ such that $$\widetilde{\mathcal{F}}_{i,j}^*(X)/\widetilde{\mathcal{F}}_{i,j+1}^*(X)=Ext^{d-j+i}_{R(G)}(\widetilde{\mathcal{F}}_{i}(X),R(G))$$
\end{proposition}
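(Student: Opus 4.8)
The plan is to exploit the direct-sum decomposition of $\widetilde{\mathcal{F}}_i(X)$ in Formula (\ref{form-decomp}) together with Lemma \ref{lem-DPV-dual}, and to match the pieces $R(G)\otimes_{R(G/G_{\underline{s}})}DM^*(X\cap\underline{s})$ against the successive quotients of a suitable filtration of $\widetilde{\mathcal{F}}_i^*(X)=R(G)/\mathcal{J}^{\nabla}_i$. The natural candidate for the filtration is by the ideals $\mathcal{J}^{\nabla}_i\subseteq \mathcal{J}^{\nabla}_{i+1}\subseteq\ldots\subseteq\mathcal{J}^{\nabla}_{d+1}=R(G)$; setting $\widetilde{\mathcal{F}}_{i,j}^*(X)\doteq \mathcal{J}^{\nabla}_{j+1}/\mathcal{J}^{\nabla}_i$ (a submodule of $R(G)/\mathcal{J}^{\nabla}_i$) gives a descending chain with $\widetilde{\mathcal{F}}_{i,i}^*(X)=\widetilde{\mathcal{F}}_i^*(X)$ and $\widetilde{\mathcal{F}}_{i,d}^*(X)=\mathcal{J}^{\nabla}_{d+1}/\mathcal{J}^{\nabla}_i=R(G)/\mathcal{J}^{\nabla}_i$ at the top — wait, one must be careful here with the indexing, since $\widetilde{\mathcal{F}}_{i,j}^*(X)/\widetilde{\mathcal{F}}_{i,j+1}^*(X)\cong \mathcal{J}^{\nabla}_{j+1}/\mathcal{J}^{\nabla}_{j+2}$, so the correct candidates are $\widetilde{\mathcal{F}}_{i,j}^*(X)\doteq\mathcal{J}^{\nabla}_{j+1}/\mathcal{J}^{\nabla}_i$, and the successive quotient is $\mathcal{J}^{\nabla}_{j+1}/\mathcal{J}^{\nabla}_{j+2}$, which I must identify with $\bigoplus_{\dim(\underline{s})=j}R(G)\otimes_{R(G/G_{\underline{s}})}DM^*(X\cap\underline{s})$ and then with $Ext^{d-j+i}_{R(G)}(\widetilde{\mathcal{F}}_i(X),R(G))$ via Lemma \ref{lem-DPV-dual}.

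The key steps, in order: First, I would spell out that $\mathcal{J}^{\nabla}_j=(\nabla_{X\setminus\underline{s}})_{\dim(\underline{s})<j}$, so passing from $j$ to $j+1$ adds exactly the generators $\nabla_{X\setminus\underline{s}}$ with $\dim(\underline{s})=j$. Second, I would compute the quotient $\mathcal{J}^{\nabla}_{j+1}/\mathcal{J}^{\nabla}_{j+2}$: modulo $\mathcal{J}^{\nabla}_{j+1}$ the new generators $\nabla_{X\setminus\underline{s}}$ (one for each $\underline{s}$ with $\dim(\underline{s})=j$) generate the quotient, and I claim the annihilator of the class of $\nabla_{X\setminus\underline{s}}$ in $R(G)/\mathcal{J}^{\nabla}_{j+1}$ is precisely $\mathcal{J}^{\nabla}_{\mathcal{L}(X\cap\underline{s})}$ pulled back along $R(G)\twoheadrightarrow R(G/G_{\underline{s}})$ — because $\nabla_a \nabla_{X\setminus\underline{s}} \in \mathcal{J}^{\nabla}_{j+1}$ iff $a\in\underline{s}$ and $a$ together with $X\setminus\underline{s}$ covers a cocircuit, i.e. iff $a$ projects into $\mathcal{L}(X\cap\underline{s})$, and conversely the generators of $\mathcal{J}^{\nabla}_{j+1}$ involving strictly higher-dimensional subspaces contribute nothing new to the quotient by the cocircuit relations inside $\underline{s}$. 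This yields a direct sum (over $\underline{s}$ of dimension $j$) of copies of $R(G)/(\text{preimage of }\mathcal{J}^{\nabla}_{\mathcal{L}(X\cap\underline{s})})\cong R(G)\otimes_{R(G/G_{\underline{s}})} DM^*(X\cap\underline{s})$, using flatness of $R(G)$ over $R(G/G_{\underline{s}})$ as in Lemma \ref{lem-DPV-dual}. Third, I would invoke Lemma \ref{lem-DPV-dual} itself, which already identifies $\bigoplus_{\dim(\underline{s})=j}R(G)\otimes_{R(G/G_{\underline{s}})}DM^*(X\cap\underline{s})$ — the $\underline{s}$ of dimension $j$ contribute to $Ext^{k}$ precisely when $k=\dim(\underline{s})=j$ in the computation for a single $DM(X\cap\underline{s})$, but for the whole $\widetilde{\mathcal{F}}_i(X)$ the shift by codimension is what gives $k=d-j+i$; I would re-read the statement of Lemma \ref{lem-DPV-dual} to get the degree bookkeeping exactly right, then match it with $Ext^{d-j+i}_{R(G)}(\widetilde{\mathcal{F}}_i(X),R(G))$.

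The main obstacle I anticipate is the second step — precisely identifying the annihilator of the class of $\nabla_{X\setminus\underline{s}}$ modulo $\mathcal{J}^{\nabla}_{j+1}$ and checking that the various $\underline{s}$ of dimension $j$ really split off as a direct sum rather than merely a filtration. One has to rule out "interference" between $\nabla_{X\setminus\underline{s}}$ and $\nabla_{X\setminus\underline{s}'}$ for distinct subspaces $\underline{s},\underline{s}'$ of the same dimension, and verify that the relations coming from $\mathcal{J}^{\nabla}_{j+1}$ (subspaces of dimension $\leq j$, i.e. $<j+1$) cut the submodule generated by $\nabla_{X\setminus\underline{s}}$ down to exactly $DM^*(X\cap\underline{s})$ and nothing smaller. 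I expect the cleanest route is to reduce this bookkeeping to the already-established decomposition (\ref{form-decomp}) of $\widetilde{\mathcal{F}}_i(X)$ by dualizing term-by-term: apply $Ext^*_{R(G)}(-,R(G))$ to the short exact sequences relating consecutive $\widetilde{\mathcal{F}}_i$'s, or equivalently to $0\to \mathcal{J}^{\nabla}_{j+1}/\mathcal{J}^{\nabla}_i\to R(G)/\mathcal{J}^{\nabla}_i \to R(G)/\mathcal{J}^{\nabla}_{j+1}\to 0$, and use Theorem \ref{resume}(iv) plus Lemma \ref{lem-DPV-dual} to see that the connecting maps vanish for degree reasons, so that the long exact sequences break into the asserted short exact sequences defining the filtration quotients.
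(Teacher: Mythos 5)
Your proposal is correct and is essentially the paper's own argument: the paper filters $\widetilde{\mathcal{F}}_i^*(X)=R(G)/\mathcal J^\nabla_i$ by the chain of ideals $\mathcal J^\nabla_i\subset\mathcal J^\nabla_{i+1}\subset\ldots\subset\mathcal J^\nabla_d$ (equivalently, by successive kernels of the projections $\widetilde{\mathcal{F}}_i^*(X)\twoheadrightarrow\widetilde{\mathcal{F}}_{i+1}^*(X)\twoheadrightarrow\ldots$), identifies each graded piece $\mathcal J^\nabla_{k+1}/\mathcal J^\nabla_k$ with $\bigoplus_{\dim(\underline{s})=k}\bigl(R(G)/(\nabla_{\underline{s}\setminus\underline{t}})_{\underline{t}\subsetneq\underline{s}}\bigr)\nabla_{X\setminus\underline{s}}\cong\bigoplus_{\dim(\underline{s})=k}R(G)\otimes_{R(G/G_{\underline{s}})}DM^*(X\cap\underline{s})$, and concludes by Lemma \ref{lem-DPV-dual}, exactly as you outline. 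The only adjustment needed is the reindexing $k=d-j+i$ that you already flag, and note that the step you single out as the main obstacle (the annihilator computation and the direct-sum splitting of $\mathcal J^\nabla_{k+1}/\mathcal J^\nabla_k$) is asserted in the paper at the same level of detail as in your sketch.
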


\begin{proof}
From the sequence of inclusions $\mathcal J_i^\nabla\subset \mathcal J_{i+1}^\nabla\subset\ldots\subset \mathcal J_{d}^\nabla$ we get the sequence of projections
$$\widetilde{\mathcal{F}}_i^*(X)\twoheadrightarrow\widetilde{\mathcal{F}}_{i+1}^*(X)\twoheadrightarrow\ldots\twoheadrightarrow\widetilde{\mathcal{F}}_d^*(X)\twoheadrightarrow0$$
and we take $\{\widetilde{\mathcal{F}}_{i,j}^*(X)\}$ as sequence of successive kernels. In this way we get isomorphisms $$\mathcal{F}_{i,j}^*(X)/\mathcal{F}_{i,j+1}^*(X)\cong\mathcal{F}_{d-j+i}^*(X)/\mathcal{F}_{d-j+i+1}^*(X)\cong \mathcal J_{d-j+i}^\nabla/\mathcal J_{d-j+i+1}^\nabla\cong$$
$$\cong(\nabla_{X\setminus\underline{s}})_{dim(\underline{s})<(d-j+i)}/(\nabla_{X\setminus\underline{s}})_{dim(\underline{s})<(d-j+i+1)}\cong$$
$$\cong\bigoplus_{dim(\underline{s})=(d-j+i)}\left(R(G)/(\nabla_{\underline{s}\backslash\underline{t}})_{_{\underline{t}\subsetneq\underline{s}}}\right)\nabla_{X\setminus\underline{s}}\cong$$
$$\cong\bigoplus_{dim(\underline{s})=(d-j+i)}R(G)\otimes_{R(G/G_{\underline{s}})}DM*(X\cap\underline{s})\cong Ext^{d-j+i}_{R(G)}(\widetilde{\mathcal{F}}_{i}(X),R(G))$$
where the last isomorphism comes from Lemma \ref{lem-DPV-dual}.
\end{proof}

In the same way, in the differentiable case we can give the following definition. The same considerations and statements apply.

\begin{definition}\label{def-Gi*}
The \emph{(differentiable) dual DPV modules} are the $S[\mathfrak{g}^*]$-modules
$$\widetilde{\mathcal{G}}_i^*(X)\doteq S[\mathfrak{g}^*]/\mathcal J^{\partial}_i$$
where $\mathcal J^{\partial}_i=(d_{X\setminus\underline{s}})_{dim(\underline{s})<i}.$
\end{definition}

\section{Recalls on equivariant K-theory}
We will briefly recall some notions about equivariant K-theory for a compact group $G$, and leave some more technical details to the appendix. The reader is suggested to refer to \cite{AtK} and \cite{Seg} for details and proofs.

\subsection{Definition}\label{defktheory}
Given a compact topological space $M$ with a continuous action of a compact Lie group $G$, one can consider \emph{equivariant complex vector bundles} on $M$, that is, complex vector bundles $E\to M$ with a $G$-action on the total space $E$, respecting the action on $M$ and acting linearly on fibers.
The \emph{equivariant $K$-theory} $K^0_G(M)$ of $M$ is the group of integer linear combination of isomorphism classes of such objects, with sum operation given by direct sum of vector bundles.

This group is naturally a ring endowed with tensor product, having zero element given by the 0-dimensional vector bundle and identity given by the bundle $\C\times M$ with the action of $G$ only on the second coordinate. Furthermore, we have a class of trivial bundles $V\times M$ where $V$ is a representation of $G$, so that we get an homomorphism $R(G)\to K_G(M)$ and hence giving $K_G^0(M)$ the structure of a $R(G)$-algebra with identity.

The definition for noncompact spaces is slightly more complicated, because K-theory is basically a theory with compact support; we leave it to the appendix. We then define $K_G^{-i}(M)=K_G^0(M\times \R^i)$. Because of Thom isomorphism (see \ref{thom} later) we will have $K_G^{-2}(M)=K_G^0(M)$. On one hand, this allows us to define $K_G^i(M)$ also for positive $i$, and on the other hand this let us focus just on $K_G^0$ (that will sometimes be denoted by simply $K_G$) and $K_G^1$, that contain all the information.

\subsection{Examples}\label{exmp}
\begin{enumerate}
\item If $M$ is a point, then $K_G(M)=R(G)$ , and $K^{1}_G(M)=0$.

\item if $M$ is given a trivial $G$-action, then $K_G(M)\cong R(G)\otimes K(M)$, where $K(M)$ is the usual $K-$theory of $M$;

\item if $M$ is given a free $G$-action, then we have an isomorphism of rings $K_G(M)\cong K(M/G)$.

\item if $G$ is abelian, and a subgroup $H$ stabilizes all points in $M$, then we have $K_G(M)\cong K_{G/H}(M)\otimes_{R(G/H)}R(G)$
\end{enumerate}

Proof of (4) is a slight modification of the classic proof of (2).

\section{Geometric realization of dual DM and DPV modules}\label{geom-dual}

Let $M_X^{fin}$, and more in general $M_X^{\geq i}$ $(0\leq i\leq d)$, be the spaces introduced in Section \ref{RT}.

In \cite{DPV2}, it is proved that $K_G^{2n-d}(M_X^{fin})\cong DM(X)$ and that $K_G^{2n-d+1}(M_X^{fin})\cong 0$; the proof of this fact relies on Atiyah's index of transversally elliptic operators, whose description goes way beyond the aim of this paper. Moreover, the actual setting of Atiyah's index is the space $T^*_GM_X^{fin}$ of covectors that are normal to $G$-orbits in $M_X^{fin}$; notice that on $M_X^{fin}$, because of the fact that orbits are al $d$-dimensional, this is a rank $2n-d$ real vector bundle; in general, this is not either necessarily a vector bundle.

In fact, the result in \cite{DPV2}, in its full generality, is the following:

\begin{theorem}[De Concini-Procesi-Vergne]\label{teor-F}
Atiyah's index gives an isomorphisms of $R(G)$-modules
$$K_G^0(T^*_GM_X^{\geq i})\cong\wt{\mathcal{F}}_i(X) \quad
\text{and} \quad K_G^1(T^*_GM_X^{\geq i})=0.$$
\end{theorem}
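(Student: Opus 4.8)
The statement is attributed to De Concini--Procesi--Vergne and proved in \cite{DPV2} via index theory of transversally elliptic operators; I sketch how that argument goes. The manifold $M_X^{\geq i}$ is a $G$-manifold, and $T^*_GM_X^{\geq i}$ is the conormal space to the orbit directions, which is the natural home of symbols of transversally elliptic operators on $M_X^{\geq i}$. The point of entry is Atiyah's index map $\operatorname{ind}\colon K_G^*(T^*_GM_X^{\geq i})\to R(G)^{-\infty}$ (or rather its refinement landing in a completion), which identifies the equivariant K-theory of $T^*_G$ with a module of generalized characters; the combinatorial content then comes from computing which generalized characters arise. First I would stratify $M_X^{\geq i}$ by orbit type: the top stratum $M_X^{fin}$ where all orbits are $d$-dimensional, and the lower strata where the orbit through a point has stabilizer $G_{\underline s}$ for a rational subspace $\underline s$ with $\dim(\underline s)\geq i$. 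On each stratum the conormal bundle is a genuine vector bundle, and example (4) of Section \ref{exmp} together with the Thom isomorphism of Section \ref{thom} lets one compute $K_G$ of the corresponding piece of $T^*_GM_X^{\geq i}$ in terms of $R(G/G_{\underline s})$-modules induced up to $R(G)$.

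The core of the argument is the excision/Mayer--Vietoris machinery. Write $M_X^{\geq i}$ as the disjoint union of its orbit-type strata; peeling off strata in order of increasing orbit dimension (i.e. removing the deepest stratum last), the six-term exact sequence of Section \ref{thom}--\ref{thom} (Functoriality and Mayer--Vietoris) relates $K_G^*(T^*_GM_X^{\geq i})$ to the K-theory of $T^*_G$ restricted to the open part $M_X^{\geq i+1}$ and to the contribution of the stratum of exactly $i$-dimensional orbits. Each such stratum is a union, over rational $\underline s$ with $\dim(\underline s)=i$, of pieces with constant stabilizer $G_{\underline s}$; on the piece associated to $\underline s$ the conormal directions split into the normal directions to $M_{\underline s}$ (on which $G_{\underline s}$ acts with the weights in $X\setminus\underline s$) and the tangent directions inside $M_{\underline s}^{fin}$. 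Applying the Thom isomorphism in the first factor and induction in orbit dimension in the second produces a copy of $R(G)\otimes_{R(G/G_{\underline s})}DM(X\cap\underline s)$, which matches exactly the summand in the DPV decomposition $\widetilde{\mathcal F}_i(X)\cong\bigoplus_{\dim(\underline s)\geq i}R(G)DM(X\cap\underline s)$ of Formula (\ref{form-decomp}). Running the induction on $i$ downward from $i=d$ (where $T^*_GM_X^{fin}$ is an honest vector bundle, so $K_G^0\cong K_G(M_X^{fin})\cong DM(X)$ by fact c) and $K_G^1$ vanishes by the Thom isomorphism since the base has trivial odd K-theory) assembles the full answer and, crucially, shows the connecting maps $\delta$ in the six-term sequence vanish, so that $K_G^1$ stays zero at every stage and $K_G^0$ is the direct sum rather than an extension.

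The vanishing of $K_G^1$ and of the connecting homomorphisms is the main obstacle. Abstractly one knows the odd K-theory of each stratum contribution vanishes because each is (after Thom) the K-theory of a base that is a product of projective toric orbifolds with vanishing odd cohomology; but the six-term sequence could still have nonzero boundary maps mixing even and odd, and ruling this out is where the index theorem does real work: De Concini--Procesi--Vergne identify the classes in $K_G^0$ explicitly via their indices (generalized characters supported on the appropriate sublattices), and because these indices are already linearly independent and account for the full rank predicted by the DPV decomposition, there is no room for a nonzero image of $\delta$. So the proof is not formal: the input from index theory is precisely what pins down the generators and forces the exact sequence to split. I would present the even-degree computation via excision and Thom as the skeleton, then cite the index-theoretic identification of generators from \cite{DPV2} to close the gap on $\delta$ and on $K_G^1=0$.
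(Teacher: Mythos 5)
The paper does not actually prove Theorem \ref{teor-F}: it is quoted from \cite{DPV2}, so there is no internal proof to compare against, and your decision to defer the index-theoretic core to that reference is appropriate. Your sketch also correctly identifies the machinery that \cite{DPV2} uses and that this paper re-deploys for its own Theorems \ref{teor-admissible} and \ref{teor-nice}: stratification by orbit type, the six-term sequence for an open/closed decomposition, the Thom isomorphism in the normal directions to $M_{\underline s}$ (responsible for the factors $\nabla_{X\setminus\underline s}$), reduction to the smaller group $G/G_{\underline s}$ via example (4) of Section \ref{exmp}, and the index map as the device that pins down generators and kills the connecting homomorphisms.

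As literally written, however, your induction is circular. You run a downward induction on $i$ whose base case $i=d$ is ``$K_G(M_X^{fin})\cong DM(X)$ by fact c)'' --- but fact c) \emph{is} the $i=d$ instance of Theorem \ref{teor-F}; it is the deep endpoint of the statement, not an available input (just as the $i=0$ endpoint is Atiyah's question). In \cite{DPV2}, and in this paper's proofs of Theorems \ref{teor-admissible} and \ref{teor-nice}, the induction is instead on the dimension of the acting torus together with the number of strata removed: the contribution of the stratum attached to $\underline s$ is computed as $K^*_{G/G_{\underline s}}$ of the analogous space for the smaller list $X\cap\underline s$, which is covered by the inductive hypothesis for a lower-dimensional group, and the genuine base case is a point or a free (or finite-stabilizer) action, where the index is computed by hand, as in Lemma \ref{passobase}. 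Relatedly, your justification of $K_G^1=0$ via ``projective toric orbifolds with vanishing odd cohomology'' is not how the vanishing is obtained and is delicate outside the unimodular case (the paper itself warns that equivariant K-theory does not pass to the quotient in general); in the actual argument $K^1=0$ is part of the inductive statement and propagates through the six-term sequences once the index map is shown to be injective on the $K^0$ of each stratum.
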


In \cite{DPV4} there is an analogue of these facts for equivariant cohomology with compact support:

\begin{theorem}[De Concini-Procesi-Vergne]\label{teor-G}
The infinitesimal index gives an isomorphism of graded $S[\mathfrak g^*]$-modules
$$H^*_{c,G}(T^*_GM_X^{\geq i})=\widetilde{\mathcal{G}}_{i}(X)$$
\end{theorem}

The theorem above is proved by introducing an analogue of the index of transversally elliptic operators (see \cite{DPV3}), the infinitesimal index; the correspondence is naturally with cohomology with compact support because of the ``compact support nature'' of equivariant K-theory.

In the same paper, there is also a calculation of the standard (meaning not with compact support) cohomology of  the spaces $M^{\geq i}_X$, which does not use any index theory and turns out to yield the dual modules described in Section \ref{duality}:

\begin{theorem}[De Concini-Procesi-Vergne]\label{teor-G^*}
There is an isomorphism of graded $S[\mathfrak g^*]$-algebras
$$H^*_{G}(T^*_GM_X^{\geq i})=\widetilde{\mathcal{G}}_{i}^*(X)$$
\end{theorem}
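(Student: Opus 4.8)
The plan is to reduce the computation of $H^*_G(T^*_G M_X^{\geq i})$ to a computation of ordinary equivariant cohomology, then exploit the same combinatorial decomposition that underlies the DPV modules. First I would observe that $T^*_G M_X^{\geq i}$ retracts $G$-equivariantly onto $M_X^{\geq i}$, so $H^*_G(T^*_G M_X^{\geq i}) \cong H^*_G(M_X^{\geq i})$ as graded $S[\mathfrak g^*]$-algebras (no Thom shift arises for ordinary cohomology, only for compact support). The heart of the matter is then to identify $H^*_G(M_X^{\geq i})$ with $\widetilde{\mathcal G}_i^*(X) = S[\mathfrak g^*]/\mathcal J^\partial_i$, where $\mathcal J^\partial_i = (d_{X\setminus\underline s})_{\dim(\underline s)<i}$.

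The natural tool is the localization/stratification of $M_X$. Recall $M_X^{\geq i} = M_X \setminus \bigcup_{\dim(\underline s)<i} M_{\underline s}$ by \eqref{tolgochiusi}. Since $M_X$ is a complex $G$-representation, $H^*_G(M_X) = H^*_G(\mathrm{pt}) = S[\mathfrak g^*]$ (with the grading doubling convention). Each $M_{\underline s}$ is an equivariant complex coordinate subspace, and removing it from a $G$-space corresponds, via the Gysin/Atiyah–Bott long exact sequence for the closed equivariant embedding $M_{\underline s} \hookrightarrow M_X$, to killing the equivariant Euler class of its normal bundle. That normal bundle is $M_{X\setminus\underline s}$, whose equivariant Euler class is precisely $\prod_{a\in X\setminus\underline s} a = d_{X\setminus\underline s}$ up to sign (each line $M_a$ contributes the character $a$ as its equivariant first Chern class). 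So I would argue, by induction on the poset of rational subspaces of dimension $<i$ ordered by inclusion (removing them one stratum at a time, from smallest dimension upward, using a Mayer–Vietoris or Gysin argument at each step), that $H^*_G(M_X^{\geq i})$ is the quotient of $S[\mathfrak g^*]$ by the ideal generated by all the $d_{X\setminus\underline s}$ with $\dim(\underline s)<i$, and that the odd cohomology vanishes, so the sequences split. This is essentially the content of \cite{DPV4} that the excerpt attributes to De Concini–Procesi–Vergne; I would cite that source for the technical details of the spectral sequence degeneration.

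The main obstacle is controlling the interaction of the strata: the closed sets $M_{\underline s}$ form an arrangement, not a single smooth divisor, so the naive Gysin sequence must be organized carefully — one needs to know that at each stage the relevant equivariant Euler class is a nonzerodivisor on the quotient built so far (so that the long exact sequence breaks into short exact sequences), and that the images of the various Euler classes generate exactly $\mathcal J^\partial_i$ with no extra relations. This is where the hypothesis $\mathrm{rk}(X) = \mathrm{rk}(\Gamma)$ and the matroidal structure enter: the classes $d_{X\setminus\underline s}$ behave like a "subspace arrangement" presentation, and the regular-sequence-type behaviour is governed by the fact that $S[\mathfrak g^*]/\mathcal J^\partial_i$ has the expected dimension (it is the dual of the finite-rank-ish module $\widetilde{\mathcal G}_i(X)$, compatibly with Theorem~\ref{teor-G}). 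I would handle this by an inductive argument on $d - i$, peeling off one dimension of strata at a time and checking the Euler-class nonzerodivisor condition against the known Hilbert series, invoking Theorem~\ref{teor-G^*}'s compact-support companion \ref{teor-G} and Poincaré–Lefschetz-type duality on the toric orbifold $M_X^{\geq i}/G_\C$ to cross-check that no cohomology is lost. Finally I would verify the ring (not merely module) structure is respected, which is immediate since $H^*_G(M_X) = S[\mathfrak g^*]$ is a ring map and all the quotient maps $H^*_G(M_X) \to H^*_G(M_X^{\geq i})$ are ring homomorphisms induced by the open inclusions, matching the algebra structure on $\widetilde{\mathcal G}_i^*(X) = S[\mathfrak g^*]/\mathcal J^\partial_i$.
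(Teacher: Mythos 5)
The paper does not actually prove Theorem~\ref{teor-G^*} --- it is recalled from \cite{DPV4} --- but your outline (equivariantly retracting $T^*_GM_X^{\geq i}$ onto the zero section, then inductively removing the strata $M_{\underline{s}}$ and identifying the Gysin pushforward of $1$ with the equivariant Euler class $d_{X\setminus\underline{s}}$ of the normal bundle $M_{X\setminus\underline{s}}$, using odd-degree vanishing and the nonzerodivisor property to split the long exact sequences) is exactly the cohomological counterpart of the induction over admissible sets of rational subspaces that the paper itself carries out in equivariant K-theory to prove Theorem~\ref{teor-admissible}, where $\nabla_{X\setminus\underline{s}}$ plays the role of the Euler class. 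So your proposal is correct and follows essentially the same route as the source it would cite and as the paper's own K-theoretic analogue.
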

In particular for $i=d$ we have
$$H^*_{G}(M_X^{fin})=H^*_{G}(T^*_GM_X^{fin})=D^*(X)$$
since of course cohomology (with its natural grading) is preserved by the deformation retract.

Note that this theorem follows the Poincaré duality philosophy, for which (co)homology and cohomology with compact support give rise to modules that are dual each other.

We will now provide an analogue of Theorem \ref{teor-G^*} for equivariant K-theory. Instead of looking for a different definition of equivariant K-theory non involving compact support, we will compactify the spaces $M_X^{\geq i}$ (actually, deformation retracts of them) to find spaces to perform the same inductive process on.

The geometric idea behind this kind of compactification is the following: instead of removing a closed subspace from a compact space (then, loosing compactness), one removes a tubular neighborhood. In this way, the resulting space is still compact, but another property is lost: smoothness. More precisely, to perform differential geometry one has to enlarge the class of spaces to \emph{manifold with corners} (for instance, see \cite{Gai1} and \cite{Gai2}); anyway, this is not an issue, because we are not requiring any smoothness or boundarylessness condition (we are not either going to use the tangent space).

We now describe in more detail our construction. First, let us restrict to the unitary sphere $S_X\subset M_X$, to have a compact space to start with. By Formula (\ref{tolgochiusi}) $S_X\cap  M_X^{fin}$, and more generally $S_X\cap  M_X^{\geq i}$, are obtained by the compact space by removing some closed subsets $S_X\cap M_{\underline{s}}$. Now, for any such subset of $S_X$, we consider a small $G$-invariant tubular neighbourhood $$U_{\underline{s}}=\{(z_a)_{a\in X}\in S_X\text{ such that }\tq z_a|<\varepsilon\text{ if }a\notin\underline{s}\}$$ where $\varepsilon$ is a sufficiently small number (for the calculations we are going to do, it will be enough to assume $\varepsilon$ being any number smaller than $1/\sqrt{n}$, where $n$ is the cardinality of $X$) and finally we are ready to define
$$S_X^{\geq i}=S_X\backslash\bigcup_{\tiny \begin{array}{c} \underline{s}\in\rx\\ \text{dim}(\underline{s}) < i\end{array}}U_{\underline{s}}$$
(in fact, we can take the union only on the rational subspaces of dimension $i-1$) and to state the main result of this paper.

\begin{theorem}\label{teor-F^*}
We have \begin{center}$\x K^0_G(S_X^{\geq i})=\wt{\mathcal{F}}_i^*(X)$ and $K^1_G(S_X^{\geq i})=0$ for $1\leq i\leq d$ \end{center}
In particular,
\begin{center}$\x K^0_G(S_X^{fin})=DM^*(X)$ and $K^1_G(S_X^{fin})=0$\end{center}

\end{theorem}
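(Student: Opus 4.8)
The plan is to prove the statement by induction on $i$, descending from $i=d$ to $i=1$, exactly mirroring the inductive structure used by De Concini--Procesi--Vergne for Theorem \ref{teor-F}. First I would establish the base case $i=d$, i.e. the claim $K^0_G(S_X^{fin})=DM^*(X)$ and $K^1_G(S_X^{fin})=0$. Here the idea is that $S_X^{fin}=S_X^{\geq d}$ is obtained from the compact sphere $S_X$ by removing the tubular neighborhoods $U_{\underline{s}}$ of the (finitely many) loci $S_X\cap M_{\underline{s}}$ with $\dim(\underline{s})<d$, i.e.\ over all rational hyperplanes. Using the Mayer--Vietoris / long exact sequence for the closed embedding given by these removed neighborhoods and comparing with what one knows about $K_G$ of $S_X$ and of the boundary pieces (which are spheres in the representations $M_{\underline{s}}$, and over those pieces a subtorus $G_{\underline{s}}$ acts trivially, so examples (1) and (4) of Section \ref{exmp} apply), one should be able to identify the $R(G)$-module $K^0_G(S_X^{fin})$ with $R(G)/\mathcal{J}^\nabla_{\mathcal{L}(X)}$. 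The key point is that the Thom/shriek pushforward from each removed $U_{\underline{s}}$ contributes exactly the relation $\nabla_{X\setminus\underline{s}}$, because the normal bundle to $S_X\cap M_{\underline{s}}$ is the representation $M_{X\setminus\underline{s}}$ and its equivariant Euler class in $R(G)$ is $\prod_{a\in X\setminus\underline{s}}(1-a)=\nabla_{X\setminus\underline{s}}$.

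For the inductive step, I would pass from $S_X^{\geq i+1}$ to $S_X^{\geq i}$. The difference between the two spaces consists of removing (or adding back) the tubular neighborhoods $U_{\underline{s}}$ for $\underline{s}$ of dimension exactly $i-1$ — equivalently, $S_X^{\geq i}$ is $S_X^{\geq i+1}$ with certain boundary collars glued in, and the excised loci $S_X\cap M_{\underline{s}}$ for $\dim(\underline{s})=i-1$ have normal direction $M_{X\setminus\underline{s}}$. Applying the six-term exact sequence of Section \ref{thom} to this closed embedding, using the inductive hypothesis $K^0_G(S_X^{\geq i+1})=\widetilde{\mathcal{F}}_{i+1}^*(X)=R(G)/\mathcal{J}^\nabla_{i+1}$ and $K^1_G=0$, and computing the $K_G$ of the boundary pieces via examples (1),(3),(4) (a subtorus acts trivially on $S_X\cap M_{\underline{s}}$, the residual action has a free part, and the quotient is a sphere bundle), the connecting homomorphism $\delta$ should vanish in odd degree (giving $K^1_G(S_X^{\geq i})=0$) and the sequence collapses to a short exact sequence realizing $K^0_G(S_X^{\geq i})$ as the extension of $\widetilde{\mathcal{F}}_{i+1}^*(X)$ by the new generators, with the new relations being exactly the $\nabla_{X\setminus\underline{s}}$ for $\dim(\underline{s})=i-1$. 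This identifies $K^0_G(S_X^{\geq i})$ with $R(G)/\mathcal{J}^\nabla_i=\widetilde{\mathcal{F}}_i^*(X)$, matching the filtration of Proposition \ref{DPV-filtr} at each stage. One also has to check that the choice of $\varepsilon<1$ makes no difference up to equivariant homotopy, which is routine.

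The main obstacle, I expect, is controlling the connecting homomorphism $\delta$ and showing the sequence splits into short exact pieces — i.e.\ verifying that each excision step adds precisely the ideal generator $\nabla_{X\setminus\underline{s}}$ and nothing more, with no hidden torsion or extension ambiguity. Concretely this reduces to a local model computation near $S_X\cap M_{\underline{s}}$: identifying the equivariant Thom class of the normal bundle $M_{X\setminus\underline{s}}$ with $\nabla_{X\setminus\underline{s}}\in R(G)$ and checking that its image in the quotient ring $\widetilde{\mathcal{F}}_{i+1}^*(X)$ (or rather the kernel/cokernel of multiplication by it) behaves as dictated by the algebraic filtration. A secondary subtlety is that $S_X^{\geq i}$ is only a manifold with corners, so one must make sure the Mayer--Vietoris sequence and the tubular-neighborhood/Thom-isomorphism machinery of Section \ref{thom} are applied to honest open/closed equivariant embeddings and collared boundaries, where these tools are still valid; since no smoothness of the boundary is used, this should go through, but it needs to be stated carefully. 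Finally, the consistency of the answer with Theorem \ref{teor-F^*}'s companion statements — in particular that $\widetilde{\mathcal{F}}_i^*(X)$ from Definition \ref{def-Fi*} really is the annihilator quotient appearing here — follows from the remark after that definition and from Proposition \ref{DPV-filtr}.
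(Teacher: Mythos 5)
Your local computation is the right one --- each excised locus $S_X\cap M_{\underline{s}}$ has normal bundle $M_{X\setminus\underline{s}}$, whose equivariant Thom/Clifford class is $\nabla_{X\setminus\underline{s}}$, so each excision should impose exactly that relation --- but the inductive scheme is set up backwards and hides the real difficulty. You take as your ``base case'' the statement $K^0_G(S_X^{fin})=DM^*(X)$, which is precisely the main content of the theorem (its ``in particular'' clause), and you offer no actual argument for it beyond ``one should be able to identify'': removing all the hyperplane neighborhoods at once is not a single closed embedding with a tubular neighborhood, because the loci $S_X\cap M_{\underline{s}}$ for different rational hyperplanes overlap, and organizing those overlaps is exactly what requires an induction. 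The same problem recurs in your inductive step: the set you add back when passing between consecutive strata is a union of \emph{overlapping} tubes $U_{\underline{s}}$ over all $\underline{s}$ of a fixed dimension, so its equivariant K-theory is not computable from Examples (1)--(4) alone; moreover each individual piece retracts onto $S_X^{\bullet}\cap S_{\underline{s}}$, which is itself a copy of $S_{\underline{s}}^{fin}$ for the smaller group $G/G_{\underline{s}}$, so you need the full strength of the theorem for a lower-dimensional torus, not just the trivial-action examples.

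The paper resolves both issues by proving a more general statement (Theorem \ref{teor-admissible}) for an arbitrary admissible set $\mathcal{Q}$ of rational subspaces and running a \emph{double} induction: on the cardinality of $\mathcal{Q}$, removing one subspace $\underline{s}$ at a time in an inclusion-compatible order (so that at each step one has a clean two-piece decomposition $S_X^{\mathcal{Q}}\cap U_{\underline{s}}\hookrightarrow S_X^{\mathcal{Q}}\hookleftarrow S_X^{\mathcal{Q}\cup\{\underline{s}\}}$ with no overlaps to untangle), and simultaneously on $\dim G$, which is what identifies $K_G^*(S_X^{\mathcal{Q}}\cap S_{\underline{s}})$ with $R(G)\otimes_{R(G/G_{\underline{s}})}DM^*(X\cap\underline{s})$. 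The genuinely elementary base case is $\mathcal{Q}=\emptyset$, namely $K^0_G(S_X)\cong R(G)/(\nabla_X)$ and $K^1_G(S_X)=0$, obtained from the six-term sequence for $\{0\}\hookrightarrow M_X$ and the Clifford class of $M_X$ at the origin. If you restructure your induction along these lines --- start from the sphere, remove one subspace at a time, and invoke the statement for smaller tori on the strata --- your Thom-class computation then does exactly the job you want it to.
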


Our proof will be based on a multiple induction. This requires to generalize it to a broader family of spaces; in order to do that, we will call \textit{simplicial} a set of nonzero rational subspaces $\mathcal{Q}$ that is closed under inclusions (if $\underline{s}\subset\underline{t}$ and $\underline{t}\in\mathcal{Q}$, then also $\underline{s}\in\mathcal{Q}$), and let $$S^{\mathcal{Q}}_X=S_X\backslash\bigcup_{\underline{s}\in\mathcal{Q}}U_{\underline{s}}.$$ Of course, the set of all proper rational subspaces, and more in general the sets $\{\text{dim}(\underline{s})<i\}$ are simplicial.

Then Theorem \ref{teor-F^*} is a corollary of the following:

\begin{theorem}\label{teor-simplicial}
If $\mathcal{Q}$ is an simplicial set of rational subspaces, then \begin{center}$\x K^0_G(S_X^{\mathcal{Q}})=
R(G)/(\nabla_{X\backslash\underline{s}})_{_{\underline{s}\in\mathcal{Q}}}$
and $K^1_G(S_X^{\mathcal{Q}})=0$.\end{center}
\end{theorem}

The beginning step of our induction is given by the following lemma.
\begin{lemma}\label{lemma11}
$K^0_G(S_X)\cong \widetilde{\mathcal{F}}_1^*(X)\cong R(G)/\nabla_X$, and $K^1_G(S_X)= 0$;
\end{lemma}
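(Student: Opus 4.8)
The plan is to compute $K^*_G(S_X)$ directly, exploiting the fact that $S_X$ is the unit sphere in the $G$-representation $M_X = \bigoplus_{a\in X} M_a$, together with the Thom isomorphism and the long exact sequence for the pair $(M_X, S_X)$ — or rather for the open embedding $M_X^{fin}$-type complement. More precisely, first I would use the standard equivariant cofiber sequence relating the open ball, the sphere, and the one-point compactification: $S_X \hookrightarrow D_X$ (the unit disk, which is $G$-equivariantly contractible to the origin) and $D_X/S_X \cong M_X^+$ (the one-point compactification, a $G$-equivariant Thom space). This gives a long exact sequence in equivariant K-theory
\begin{center}
$\cdots \to \widetilde K_G^{i}(M_X^+) \to K_G^i(D_X) \to K_G^i(S_X) \to \widetilde K_G^{i+1}(M_X^+) \to \cdots$
\end{center}
where $K_G^i(D_X) \cong K_G^i(\mathrm{pt}) = R(G)$ for $i=0$ and $0$ for $i=1$, and $\widetilde K_G^*(M_X^+) = K_G^*(M_X)$ (compactly supported) which by the Thom isomorphism is a free rank-one $R(G)$-module concentrated in the appropriate degree, generated by the Bott/Thom class $\lambda_{-1}(M_X) = \prod_{a\in X}(1 - [M_a^*])$, i.e. (after identifying characters $a$ with their classes) $\prod_{a\in X}(1-a) = \nabla_X$.

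The key computation then is to identify the connecting map $K_G^*(M_X) \to K_G^*(D_X) = R(G)$ with multiplication by $\nabla_X$: this map sends the Thom class of $M_X$ to its restriction to the zero section, which is exactly the equivariant Euler class $\lambda_{-1}(M_X) = \prod_{a\in X}(1-a)$ by the standard computation of the Euler class of a sum of line bundles. Since $R(G)$ is a domain and $\nabla_X \neq 0$ (as none of the $a\in X$ is trivial), this map is injective. The long exact sequence then collapses to the short exact sequence $0 \to R(G) \xrightarrow{\cdot\,\nabla_X} R(G) \to K_G^0(S_X) \to 0$ in degree $0$, and shows $K_G^1(S_X) = 0$. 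Hence $K_G^0(S_X) \cong R(G)/(\nabla_X)$, and by Definition \ref{def-Fi*} with $i=1$ (so that $\mathcal J^\nabla_1 = (\nabla_{X\setminus\underline s})_{\dim\underline s < 1} = (\nabla_{X\setminus 0}) = (\nabla_X)$, using that the only rational subspace of dimension $<1$ is $\{0\}$, for which $X\setminus\underline 0 = X$), this is precisely $\widetilde{\mathcal F}_1^*(X)$.

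I expect the main obstacle to be the bookkeeping of degrees and signs in the long exact sequence — tracking exactly which of $K_G^0$ versus $K_G^1$ the Thom class of $M_X$ lives in (it depends on $\dim_\C M_X = n$ modulo $2$ via Bott periodicity), and checking that after the periodicity identifications the connecting homomorphism really lands where claimed with the right module structure. This is the same phenomenon that appears in the cohomological version (Theorem \ref{teor-G^*}), where the grading gets shifted by $2n-d$; here Bott periodicity means the shift is harmless for the $\Z/2$-graded theory, but one must be careful. A cleaner alternative, which I might prefer, is to avoid the disk entirely and argue via the pair $(M_X, M_X \setminus 0)$: note $M_X\setminus 0$ deformation retracts $G$-equivariantly onto $S_X$, and $M_X$ is $G$-contractible, so the long exact sequence of the open embedding $M_X\setminus 0 \hookrightarrow M_X$ with closed complement $\{0\}$ reads $\cdots \to K^i_{G,c}(M_X\setminus 0) \to K^i_{G,c}(M_X) \to K^i_G(\mathrm{pt}) \to \cdots$; but for the sphere $S_X$ itself (compact) one wants ordinary, not compactly supported, K-theory, so the disk formulation above is the more direct route. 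In either case the heart of the matter is the single identity: the restriction of the Thom class of $M_X$ to the origin is $\prod_{a\in X}(1-a)$, and $R(G)$ has no zero-divisors.
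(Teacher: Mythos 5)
Your proposal is correct and is essentially the paper's argument: the paper runs the exact sequence for the closed point $\{\underline 0\}\hookrightarrow M_X$ with open complement $M_X\setminus\{\underline 0\}\cong \R\times S_X$ (precisely the "alternative route" you mention, with the degree shift absorbed by $K_G^{-1}(S_X)=K_G(S_X\times\R)$), identifies the restriction map $K_G^0(M_X)\to K_G^0(\{\underline 0\})$ with multiplication by the Clifford/Euler class $\prod_{a\in X}(1-a)=\nabla_X$, and concludes from injectivity of this map exactly as you do. Your disk/Thom-space formulation is the same long exact sequence reindexed, and your identification of $\mathcal J_1^\nabla=(\nabla_X)$ matches the paper's.
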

\begin{proof}
Let us consider the Mayer-Vietoris exact sequence
\begin{center}
\begin{tikzpicture}[description/.style={fill=white,inner sep=2pt}]
\matrix (m) [matrix of math nodes, row sep=3em,
column sep=2.5em, text height=1.5ex, text depth=0.25ex]
{ K_G^0(M_X\setminus \{\underline{0}\}) & K_G^0(M_X) & K_G^0(\{\underline{0}\}) \\
K_G^1(\{\underline{0}\}) & K_G^1(M_X) & K_G^1(M_X\setminus \{\underline{0}\}) \\ };
\path[->,font=\scriptsize]
(m-1-1) edge node[auto] {$  $} (m-1-2)
(m-1-2) edge node[auto] {$  $} (m-1-3)
(m-1-3) edge node[auto] {$  $} (m-2-3)
(m-2-3) edge node[auto] {$  $} (m-2-2)
(m-2-2) edge node[auto] {$  $} (m-2-1)
(m-2-1) edge node[auto] {$  $} (m-1-1);
\end{tikzpicture}
\end{center}

The bottom left and center elements of this sequence are 0 (because of Thom isomorphism), so we have to look at kernel and cokernel of $K_G^0(M_X)\to K_G^0(\{\underline{0}\})$; now, both this two modules are isomorphic to $R(G)$ (the latter being isomorphic to $R(G)$ as ring too), and the map is the multiplication by the Clifford class
$[\bigwedge^{odd}M_X]-\left[\bigwedge^{even}M_X\right]$
that by a straightforward calculation appears to be exactly $\nabla_X$; so, $K_G^0(M_X\setminus \{\underline{0}\})=0$ and $K_G^1(M_X\setminus \{\underline{0}\})=R(G)/\nabla_X$. But now, since $M_X\setminus \{\underline{0}\}=\R\times S_X$, we are done.
\end{proof}

The inductive step, on the other hand, will use the following geometric lemma.

\begin{lemma}\label{geomlemma}
Let $\mathcal{Q}$ be a simplicial set of rational subspaces, and $\underline{s}$ be a rational set that is minimal among those not in $\mathcal{Q}$ (that means, all proper rational subspaces of $\underline{s}$ belong to $\mathcal{Q}$). Let $S_{\underline{s}}$ be the unitary sphere in the vector subspace $M_{\underline{s}}$ of $M_X$ where the only nonzero coordinates are those of elements of $\underline{s}$. Then $S_X^{\mathcal{Q}}\cap U_{\underline{s}}$ is equivariantly homeomorphic to the normal bundle of $S_X^{\mathcal{Q}}\cap S_{\underline{s}}$ in $S_X^{\mathcal{Q}}$. More precisely, we have an equivariant homeomorphism
$$S_X^{\mathcal{Q}}\cap U_{\underline{s}}\cong (S_X^{\mathcal{Q}}\cap S_{\underline{s}}) \times M_{X\setminus\underline{s}}.$$ 
\end{lemma}

\begin{proof}
Let us describe the map explicitly; recall that we have
$$S_X^{\mathcal{Q}}\cap U_{\underline{s}}=\left\{(z_a)_{a\in X}\mid \begin{array}{c}
|z_a|<\varepsilon \ \text{if }a\notin\underline{s}\\
max_{a\notin \underline{t}}\{|z_a|\}\geq\varepsilon \ \text{for }\underline{t}\in\mathcal{Q}\end{array}\right\}$$
For a point in $z\in S_X^{\mathcal{Q}}$ let $$|z|^2_{\underline{s}}=\sum_{a\notin\underline{s}}|z_a|^2.$$
Then we define $\phi:S_X^{\mathcal{Q}}\cap U_{\underline{s}}\to (S_X^{\mathcal{Q}}\cap S_{\underline{s}}) \times M_{X\setminus\underline{s}}$ by

$$\phi(z)_a=\begin{cases}\frac{z_a}{\sqrt{1-|z|^2_{\underline{s}}}} \ \text{if }a\in\underline{s} \\
\frac{z_a}{\varepsilon^2-|z_a|^2} \ \text{if }a\notin\underline{s} \end{cases}.$$

It is very easy to check that the coordinates in $\underline{s}$ belong to a point of $S_X^{\mathcal{Q}}\cap S_{\underline{s}}$ (notice that because of $\varepsilon<1/\sqrt{n}$ the denominators are never zero), that the map is bijective, and $G$-equivariant, because $G$ does not change the absolute value. Hence, projecting onto the $S_X^{\mathcal{Q}}\cap S_{\underline{s}}$, we also get that $S_X^{\mathcal{Q}}\cap U_{\underline{s}}$ is a (equivariant) tubular neighborhood, and hence (equivariantly) homeomorphic to the normal bundle, of $S_X^{\mathcal{Q}}\cap S_{\underline{s}}$.
\end{proof}

We are now ready to prove Theorem \ref{teor-simplicial}. We will use some technical notions about equivariant $K$-theory that will be briefly explained in the appendix.

\subsection{Proof of Theorem \ref{teor-simplicial}}

Let us proceed by a double induction on the cardinality of $\mathcal{Q}$ and on the dimension of the group $G$; we start with $\mathcal{Q}=\emptyset$ and we will add one rational subspace at the time; this step will be based on the statement of the theorem for a lower dimensional group. When $\mathcal{Q}=\emptyset$, lemma \ref{lemma11} gives us the statement of the theorem.

Now, suppose we want to add a rational subspace $\underline{s}$ to $\mathcal{Q}$, such that all rational subspaces $\underline{t}$ of $\underline{s}$ already belong to $\mathcal{Q}$. On the geometric side, we have to remove from $S_X^{\mathcal{Q}}$ the open set $S_X^{\mathcal{Q}}\cap U_{\underline{s}}$, to get $S_X^{\mathcal{Q}\cup\{\underline{s}\}}$.
We will investigate the pushforward homomorphism
\begin{equation}\label{form-adm}
K_G^0(S_X^{\mathcal{Q}}\cap U_{\underline{s}})\to K_G^0(S_X^{\mathcal{Q}})
\end{equation}
that we wish to fill in the exact sequence of the inclusions $S_X^{\mathcal{Q}}\cap U_{\underline{s}}\hookrightarrow S_X^{\mathcal{Q}} \hookleftarrow S_X^{\mathcal{Q}\cup\{\underline{s}\}}$, to get the inductive step.

At first, as seen in Lemma \ref{geomlemma}, the set $S_X^{\mathcal{Q}}\cap U_{\underline{s}}$ is isomorphic to the normal bundle of the space $S_X^{\mathcal{Q}}\cap S_{\underline{s}}$ in $S_X^{\mathcal{Q}}$; hence, by Thom isomorphism, 
$$K^i_G(S_X^{\mathcal{Q}}\cap U_{\underline{s}})\cong K^i_G(S_X^{\mathcal{Q}}\cap S_{\underline{s}}).$$
Hence, let us focus on $S_X^{\mathcal{Q}}\cap S_{\underline{s}}$; by the fact that $\mathcal{Q}$ contains all rational subspaces of $\underline{s}$, we have
$$S_X^{\mathcal{Q}}\cap S_{\underline{s}}=S_{\underline{s}}\setminus\bigcup_{\underline{t}\subset\underline{s}}U_{\underline{t}}.$$
Considering now $H=ker(\underline{s})$ the kernel of all characters belonging
to $\underline{s}$ we have an action of $G/H$ on this space, because this space is contained in $M_{\underline{s}}$, so $H$ will stabilize every point of it; considering this action, this space is exactly $S_{\underline{s}}^{fin}$. So, by inductive hypothesis, we get that $K_{G/H}^1(S_X^{\mathcal{Q}}\cap S_{\underline{s}})=0$ and
$$K_{G/H}^0(S_X^{\mathcal{Q}}\cap S_{\underline{s}})=R(G/H)/(\nabla_{\underline{s}\backslash\underline{t}})_{_{_{\underline{t}\subset\underline{s}}}}.$$
By the fourth Example in Section \ref{exmp}, we get that $K_G^1(S_X^{\mathcal{Q}}\cap S_{\underline{s}})=0$ and $K_G^0(S_X^{\mathcal{Q}}\cap S_{\underline{s}})=R(G)/(\nabla_{\underline{s}\backslash\underline{t}})_{_{_{\underline{t}\subset\underline{s}}}}.$
We then get, going back through Thom isomorphism, $K_G^1(S_X^{\mathcal{Q}}\cap U_{\underline{s}})=0$ and $K_G^0(S_X^{\mathcal{Q}}\cap U_{\underline{s}})\cong R(G)/(\nabla_{\underline{s}\backslash\underline{t}})_{_{_{\underline{t}\subset\underline{s}}}}$, where the last is an isomorphism of $R(G)$-modules but \textit{not} of rings. Still by Thom isomorphism, and using that $S_X^{\mathcal{Q}}\cap U_{\underline{s}}\cong (S_X^{\mathcal{Q}}\cap S_{\underline{s}}) \times M_{X\setminus\underline{s}}$ from Lemma \ref{geomlemma}, the generator of $K_G^0(S_X^{\mathcal{Q}}\cap U_{\underline{s}})$ as $R(G)$-module is the Clifford element
$$\x\bigwedge^{odd}M_{X\setminus\underline{s}}\xrightarrow{c(E)}\bigwedge^{even}M_{X\setminus\underline{s}}.$$
Under the open pushforward into $K_G^0(S_X^{\mathcal{Q}})=R(G)/(\nabla_{X\backslash\underline{r}})_{_{_{\underline{r}\in\mathcal{Q}}}},$ this generator is sent into $\nabla_{X\setminus\underline{s}}$, obtained just writing the Clifford element in the difference of operators form.

Notice that the fact that $\nabla_{\underline{s}\setminus\underline{t}}=0$ in $K_G^0(S_X^{\mathcal{Q}}\cap S_{\underline{s}})$ is reflected into the fact that $$\nabla_{X\setminus\underline{s}}\cdot\nabla_{\underline{s}\setminus\underline{t}}=\nabla_{X\setminus\underline{t}}=0$$ in $K_G^0(S_X^{\mathcal{Q}})$, because $\underline{t}\in\mathcal{Q}$ for all proper subspaces of $\underline{s}$.

We can now investigate the Mayer-Vietoris sequence
\begin{center}
\begin{tikzpicture}[description/.style={fill=white,inner sep=2pt}]
\matrix (m) [matrix of math nodes, row sep=3em,
column sep=2.5em, text height=1.5ex, text depth=0.25ex]
{ K_G^0(S_X^{\mathcal{Q}}\cap U_{\underline{s}}) & K_G^0(S_X^{\mathcal{Q}}) & K_G^0(S_X^{\mathcal{Q}\cup\{\underline{s}\}}) \\
K_G^1(S_X^{\mathcal{Q}\cup\{\underline{s}\}}) & K_G^1(S_X^{\mathcal{Q}}) & K_G^1(S_X^{\mathcal{Q}}\cap U_{\underline{s}}) \\ };
\path[->,font=\scriptsize]
(m-1-1) edge node[auto] {$  $} (m-1-2)
(m-1-2) edge node[auto] {$  $} (m-1-3)
(m-1-3) edge node[auto] {$  $} (m-2-3)
(m-2-3) edge node[auto] {$  $} (m-2-2)
(m-2-2) edge node[auto] {$  $} (m-2-1)
(m-2-1) edge node[auto] {$  $} (m-1-1);
\end{tikzpicture}
\end{center}
where we know that the bottom right element is 0, and bottom center too by inductive hypothesis. The upper left homomorphism is, from what we have seen, just
$$R(G)/(\nabla_{\underline{s}\backslash\underline{t}})_{_{_{\underline{t}\subset\underline{s}}}}\xrightarrow{\cdot\nabla_{X\backslash\underline{s}}} R(G)/(\nabla_{X\backslash\underline{r}})_{_{_{\underline{r}\in\mathcal{Q}}}},$$
hence its kernel, that would be $K_G^1(S_X^{\mathcal{Q}\cup\{\underline{s}\}})$, is going to be zero, and its cokernel, that would be $K_G^0(S_X^{\mathcal{Q}\cup\{\underline{s}\}})$, is going to be $$R(G)/(\{\nabla_{X\backslash\underline{r}}\}_{_{_{\underline{r}\in\mathcal{Q}}}},\nabla_{X\setminus\underline{s}})=R(G)/(\{\nabla_{X\backslash\underline{r}}\}_{_{_{\underline{r}\in\mathcal{Q}\cup\{\underline{s}\}}}}),$$
that is exactly what we needed to prove.

\begin{remark}
In this way we found a geometric realization for $\widetilde{\mathcal{F}}_i^*(X)$ only for $i\geq 1$, because these are the only ones that can be got from simplicial sets; for $\widetilde{\mathcal{F}}^*(X)$, the extremal case, we haven't either defined a compact space to match with it; this is because we should consider also points that are fixed by the $G$ action, so the whole $M_X$, that doesn't retract onto the unit sphere $S_X$. But if we retract $M_X$ onto the origin, we immediately get
$$K_G^0(\{pt\})\cong\widetilde{\mathcal{F}}^*(X)\cong R(G)\ \text{and}\ K_G^1(\{pt\})=0$$
\end{remark}

How this theorem may lead to a more general statement about duality in equivariant K-theory is still a subject of investigation.

\section{The case of local DM modules}

As stated in Theorems \ref{teor-G} and \ref{teor-F}, the modules $DM(X)$ and $D(X)$ have a geometric realization in terms of the manifold $M_X^{fin}$, given by Atiyah's index and its cohomological analogue, the infinitesimal index.

Recall from sections \ref{loc-gen} the definitions of the local modules $DM_\Omega(X)$ and $D_\Omega(X)$, and from section \ref{RT} the submanifolds $M_X^{\Omega}\subset M_X^{fin}$ combinatorially defined in a similar way; 

We also know that these modules have interesting submodules, the local modules $DM_\Omega(X)$ and $D_\Omega(X)$, and that there are submanifolds $M_X^{\Omega}\subset M_X^{fin}$ combinatorially defined in a similar way; it is then natural to wonder if these submanifolds give geometric representations of the modules $DM_\Omega(X)$ and $D_\Omega(X)$.

\begin{conjecture}\label{cong-local}
There is an isomorphism of $R(G)$-modules
$$K_G^0(T^*_GM_X^\Omega)\cong DM_\Omega(X)$$
that is given by the Atiyah's index; furthermore, $K_G^1(T^*_GM_X^\Omega)=0$.
\end{conjecture}

A nice feature of this conjecture is that we already know from the theory of splines that the various spaces $DM_{\Omega}(X)$ are cyclic $R(G)$-modules generated by the quasi-polynomial $q_\Omega$ coming from the partition function; so, the inclusions $DM_{\Omega}(X)\hookrightarrow DM(X)$ give exactly a minimal set of generators of $DM(X)$ as $R(G)$-module. Then, this conjecture would provide a geometric analogue of this, giving the geometric support of these generators $q_\Omega$ considered inside $K_G^0(T^*_GM_X^{fin})$. 

Of course, the same can be stated for equivariant cohomology.

\begin{conjecture}\label{cong-local2}
There is an isomorphism of graded $S[\mathfrak{g}^*]$-modules
$$H_{c,G}^*(T^*_GM_X^\Omega)\cong D_\Omega(X)$$
that is given by the infinitesimal index.
\end{conjecture}

\section{Appendix: Further recalls on equivariant K-theory}

\subsection{Definition in the locally compact case}

To define equivariant K-theory in the locally compact case, we will need a different definition. Following \cite{Seg}, in a compactly supported fashion, equivariant K-theory is defined in the following way.

\begin{definition}
The equivariant K-theory of a locally compact $G$-space $M$ classifies objects of the kind $$\{E\xrightarrow{\phi}F\}$$
where $E$ and $F$ are equivariant vector bundles on $M$, and $\phi$ is an isomorphism outside a compact subspace of $M$.
\end{definition}

We are not going to explicitly describe the meaning of the word ``classifies''; the only things to keep in mind are that there are (quite obvious) notions of isomorphism and homotopy between such objects, and that objects like $\{E\xrightarrow{id}E\}$ are set to be equivalent to 0.

This definition agrees with the one given for compact spaces; in particular, on a compact space $M$, the correspondence is obtained sending the object $\{E\xrightarrow{\phi}F\}$ to the formal difference $[E]-[F]$; note that in the compact case the map $\phi$ may be \emph{nowhere} an isomorphism, and part of the proof of the equivalence is showing that we can move $\phi$ in an homotopic way to make it the zero function.

We have again an $R(G)$-algebra structure, but not necessarily with an identity element, and we do not have anymore a ring homomorphism $R(G)\to K_G(M)$ either.

Recall that we define $K_G^{-i}(M)=K_G(M\times\R^i)$, where $\R^i$ is given a trivial action of $G$, that again carries an $R(G)$-algebra structure (for $i=0$ we get the same object we just defined). As we have seen in \ref{defktheory}, all the information is actually contained in $K_G^{0}(M)$ and $K_G^{1}(M)$.

\subsection{Thom isomorphism}\label{thom}
If $E\xrightarrow{\pi}M$ is a $G$-vector bundle, then we have an isomorphism as $R(G)$-modules (but not as rings)
 $$K_G(M)\xrightarrow{Th}K_G(E).$$
If $M$ is compact, we can give an explicit description of the image of the identity element of $K_G(M)$, so that we can see it as a generator of $K_G(E)$ as $K_G(M)$-module.

We can consider $E$ as an equivariant vector bundle on $E$ itself (pulling back from the projection $E\to M$), and then consider all its exterior powers $\wedge^i E$ still as equivariant vector bundles on $E$. We will call $\wedge^{odd}E$ and $\wedge^{even}E$ the direct sum of all odd (resp. even) exterior powers of $E$; between this two vector bundles on $E$ we have a special map $c(E)$ (coming from the wedge product), called the \textit{Clifford map} (see \cite{DPV2}, pag. 795). So now, the data of
$$\x\bigwedge^{odd}E\xrightarrow{c(E)}\bigwedge^{even}E$$
is an element of $K_G(E)$, because $c(E)$ is an isomorphism everywhere except possibly on the zero section of $E$ (namely, $M$), which is indeed compact; this is the generator we were talking about.

\subsection{Functoriality and Mayer-Vietoris sequence}
Equivariant K-theory is a contravariant functor of $R(G)$-algebras for proper maps (by pullback), and a covariant functor of $R(G)$-modules for open embeddings (by extension).

Given $Z\xrightarrow{i}M$ closed equivariant embedding, calling $U=M\backslash Z\xrightarrow{j}M$ the open embedding, we have connecting homomorphisms $\delta$ giving the following exact sequence
\begin{center}
\begin{tikzpicture}[description/.style={fill=white,inner sep=2pt}]
\matrix (m) [matrix of math nodes, row sep=3em,
column sep=2.5em, text height=1.5ex, text depth=0.25ex]
{ K_G^0(U) & K_G^0(M) & K_G^0(Z) \\
K_G^1(Z) & K_G^1(M) & K_G^1(U). \\ };
\path[->,font=\scriptsize]
(m-1-1) edge node[auto] {$ j_* $} (m-1-2)
(m-1-2) edge node[auto] {$ i^* $} (m-1-3)
(m-1-3) edge node[auto] {$ \delta $} (m-2-3)
(m-2-3) edge node[auto] {$ j_* $} (m-2-2)
(m-2-2) edge node[auto] {$ i^* $} (m-2-1)
(m-2-1) edge node[auto] {$ \delta $} (m-1-1);
\end{tikzpicture}
\end{center}

\begin{thebibliography}{99}
\bibitem{AP} \textsc{F. Ardila, A. Postnikov}, \textit{Combinatorics and geometry of power ideals}, Trans. Amer. Math. Soc. 362 (2010) 4357-4384.
\bibitem{ell} \textsc{M. F. Atiyah}, \emph{Elliptic operators and compact groups}, Lecture Notes in Mathematics, {  401}, Springer-Verlag, Berlin-New York, 1974.
\bibitem{AtK} \textsc{M. F. Atiyah}, \emph{K-theory}, Lecture notes by D. W. Anderson W. A. Benjamin, Inc., New York-Amsterdam, 1967.
\bibitem{BDK} \textsc{B. Bakalov, A. D'Andrea, V. G. Kac}, \emph{Irreducible modules over finite simple Lie pseudoalgebras I. Primitive pseudoalgebras of type W and S}, Advances in Mathematics, Volume 204, Issue 1, 1 August 2006, Pages 278–346.
\bibitem{BM} \textsc{P. Br\"and\'en, L. Moci}, \textit{The multivariate arithmetic Tutte polynomial}, arXiv:1207.3629 [math.CO], to appear on Transaction of the Amer. Math. Soc.
\bibitem{Ch} \textsc{S. Chemla}, \textit{Poincaré duality for k-A Lie superalgebras}, Bull. Soc. math. France, Volume 122, 1994, Pages 371-397.
\bibitem{DM} \textsc{M. D'Adderio, L. Moci}, \textit{Arithmetic matroids, Tutte polynomials, and toric arrangements}, to appear on Advances in Mathematics.
\bibitem{DM1} \textsc{W. Dahmen and C. A. Micchelli}, \textit{On the local linear independence of translates of a box spline}, Studia Math. 82 (1985), 243-263.
\bibitem{DM2} \textsc{W. Dahmen and C. A. Micchelli}, \textit{On the solution of certain systems of partial difference equations and linear dependence of translates of box splines.}, Trans. Amer. Math.Soc., 292(1): 305-320, 1985.
\bibitem{DMi} \textsc{W. Dahmen and C. A. Micchelli}, \textit{The number of solutions to linear Diophantine equations and multivariate splines}, Trans. Amer. Math. Soc., 308(2): 509-532, 1988.
\bibitem{DH} \textsc{C. de Boor, K. H\"ollig}, \textit{B-splines for parallelepipeds}, J. Anal. Math. 42 (1982-83), 99-115.
\bibitem{li} \textsc{C. De Concini, C. Procesi}, \textit{Topics in hyperplane arrangements, polytopes and box-splines}, Universitext, Springer-Verlag, New-York (2010), XXII+381 pp.
\bibitem{DPV1} \textsc{C. De Concini, C. Procesi, M. Vergne}, \textit{Vector partition function and generalized Dahmen-Micchelli spaces}, Transform. Groups  {15} (2010), no. 4, 751–773.
\bibitem{DPV2} \textsc{C. De Concini, C. Procesi, M. Vergne}, \textit{Vector partition functions and index of transversally elliptic operators}, Transform. Groups  {15} (2010), no. 4, 775–811.
\bibitem{DPV3} \textsc{C. De Concini, C. Procesi, M. Vergne}, \textit{The infinitesimal index}, arXiv: math 1003.3525v4
\bibitem{DPV4} \textsc{C. De Concini, C. Procesi, M. Vergne}, \textit{Infinitesimal indes: cohomology computations}, Transform. Groups 16 (2011), no. 3, 717–735.
\bibitem{DPV5} \textsc{C. De Concini, C. Procesi, M. Vergne}, \textit{Box splines and the equivariant index theorem}, arXiv: math 1012.1049
\bibitem{FM} \textsc{A. Fink, L. Moci}, \textit{Matroids over a ring}, arXiv: math 1209.6571.
\bibitem{Gai1} \textsc{G. Gaiffi}, \textit{Models for real subspace arrangements and stratified manifolds}, Int. Math. Res. Not. 2003,  {12}, 627–656.
\bibitem{Gai2} \textsc{G. Gaiffi}, \textit{Real structures of models of arrangements}, Int. Math. Res. Not. 2004,  {64}, 3439–3467.

\bibitem{HR} \textsc{O. Holtz, A. Ron}, \textit{Zonotopal Algebra}, Advances in Mathematics, 227 (2011), no.2, 847-894.
\bibitem{HRX} \textsc{O. Holtz, A. Ron, Zhiqiang Xu}, \textit{Hierarchical zonotopal spaces}, arXiv:0910.5543v2, to appear in Trans. Amer. Math. Soc.
\bibitem{Jap} \textsc{R. Hotta, K. Takeuchi, T. Tanisaki}, \emph{D-modules, perverse sheaves, and representation theory}, 
Progress in Mathematics,  {236}. Birkh\"auser Boston, Inc., Boston, MA, 2008.
\bibitem{Le} \textsc{M. Lenz}, \textit{Hierarchical zonotopal power ideals}, arXiv:1011.1136v2 [math.CO].
\bibitem{Seg} \textsc{G. Segal}, \emph{Equivariant K-theory}, Inst. Hautes Études Sci. Publ. Math.,  {34}, 1968, 129–151.
\bibitem{SzV} \textsc{Andr\'as Szenes, Mich\'ele Vergne}, \emph{Residue formulae for vector partitions and Euler-MacLaurin sums}, Adv. in Appl. Math., 30(1-2):295-342, 2003. Formal
power series and algebraic combinatorics (Scottsdale, AZ, 2001).


\end{thebibliography}
\end{document}